\newtheorem*{thm*}{Theorem A}
\newtheorem{thm}{Theorem}[section]
\newtheorem{dfn}{Definition}[section]
\newtheorem{lemma}{Lemma}[section]
\newtheorem{prop}{Proposition}[section]
\newtheorem*{conj*}{Conjecture A}
\newtheorem*{conj**}{Conjecture B}
 \numberwithin{equation}{section}
\begin{document}

\def\RR{{\mathbb{R}}}

\title[Symmetry for nonlinear nonlocal operators]{Symmetry properties for solutions of  nonlocal equations involving nonlinear operators}

\author{Mostafa Fazly and Yannick Sire}

\address{Department of Mathematics, The University of Texas at San Antonio, San Antonio, TX 78249, USA} \email{mostafa.fazly@utsa.edu}
\address{Department of Mathematics, Johns Hopkins University, Baltimore, MD 21218, USA} \email{sire@math.jhu.edu}

\thanks{}

\maketitle

\begin{abstract}  
We pursue the study of one-dimensional symmetry of solutions to nonlinear equations involving nonlocal operators. We consider a vast class of nonlinear operators and in a particular case it covers the fractional $p-$Laplacian operator.  Just like the classical De Giorgi's conjecture, we establish a Poincar\'e inequality and a linear Liouville theorem to provide two different proofs of the one-dimensional symmetry results in two dimensions. Both approaches are of independent interests.  In addition,  we provide certain  energy estimates for layer solutions and Liouville theorems for stable solutions. Most of the methods and ideas applied in the current article are applicable to nonlocal operators with general kernels where the famous extension problem, given by Caffarelli and Silvestre, is not necessarily known. 

\end{abstract}

\vspace{1cm}

\noindent
{\it \footnotesize 2010 Mathematics Subject Classification:} {\scriptsize  35J60, 35B35, 35B32,  35D10, 35J20. }\\
{\it \footnotesize Keywords: Nonlocal operators, nonlinear operators, stable solutions, classifications of solutions, one-dimensional solutions}. {\scriptsize }

\section{Introduction} 

We examine nonlocal and nonlinear operators whose
model is associated with  the following energy functional for $\Omega\subset \mathbb R^n$
\begin{equation}\label{energy}
\mathcal E^\Phi_K(u,\Omega):=\mathcal K^\Phi_K{(u,\Omega)} - \int_{\Omega}  F(u) dx, 
\end{equation}
when  the term $\mathcal K^\Phi_K$ is given by 
\begin{equation}\label{kphik}
\mathcal K^\Phi_K(u,\Omega):=  \frac{1}{2} \iint_{\mathbb R^{2n}\setminus (\mathbb R^n\setminus \Omega)^2 }  \Phi[ u(x) -u(y) ] K (x-y) dy dx. 
\end{equation}
For the above operator, we suppose that $K$ is a nonnegative  measurable kernel and even that is $K (z) = K (-z)$ for  $ z\in\mathbb R^n$ and $F\in C^1(\mathbb R)$.  We also assume that the function $\Phi\in C^2(\mathbb R^+)$, $\Phi(0)=\Phi'(0)=0$, $\Phi'$ is an odd function with $\Phi, \Phi', \Phi''>0$ in $\mathbb R^+$.   For even  nonlinearity $\Phi$ and even kernel $K$, the above kinetic energy $\mathcal K^\Phi_K$ becomes 
\begin{equation}\label{kphisymm}
\mathcal K^\Phi_K(u,\Omega) =  \frac{1}{2} \int_{\Omega} \int_{\Omega}   \Phi[ u(x) -u(y) ]  K (x-y) dy dx +  \int_{\Omega} \int_{\mathbb{R}^n\setminus \Omega}  \Phi[ u(x) -u(y) ]  K (x-y) dy dx. 
\end{equation}
The associated Euler-Lagrange nonlocal equation to (\ref{energy}) is 
 \begin{equation} \label{mainw}
\iint_{\mathbb R^{2n}}  \Phi'[ u(x) -u(y) ] [v(x) - v(y)] K (x-y) dy dx = \int_{\mathbb R^n} f(u(x)) v(x) dx,
  \end{equation}   
for every smooth function $v$ with compact support and when  $f(t)=F'(t)$. In this regard, we study solutions of the following nonlocal equation 
 \begin{equation} \label{main}
T_{ \Phi }[u(x)]  =  f(u(x))  \quad  \text{in} \ \  \RR^n , 
  \end{equation}   
  when the operator $T_{ \Phi }$ is defined by 
 \begin{eqnarray} \label{T}
 \begin{array}{lcl}
T_{ \Phi }[u(x)] &:= & p.v.  \int_{\RR^n } \Phi'[u(x) - u(y)] K (y-x) dy 
\\&=& 
\lim_{\epsilon\to 0} \int_{\RR^n\setminus B_\epsilon(x) } \Phi'[u(x) - u(y)] K (y-x) dy , 
\end{array}
    \end{eqnarray}   
where the notation $p.v.$ stands for the principal value.  Note that when  $\Phi(t)= \frac{t^2}{2}$ the operator  $T_{ \Phi }$  is a linear operator and the associated equation is of the form 
\begin{equation}\label{2lap}
p.v.\int_{\mathbb R^n}  [u(x)-u(y)]   K (y-x) dy  =   f(u)   \quad  \text{in} \ \  \mathbb{R}^n . 
\end{equation}
The above linear operator is well-studied in the literature in particular for the following (translation invariant) standard kernel 
 \begin{equation}\label{Jumpi}
 K (x-z) =  \frac{c(x-z)}{|x-z|^{n+\alpha}}, 
 \end{equation} 
where $c(x-z)$ is bounded between two positive constants $0<\lambda \le \Lambda$  and $0<\alpha<2$ (see \cite{fg} and references therein). For the case of $\lambda = \Lambda$  that is when 
\begin{equation}\label{fracK}
 K (x-z) =  \frac{\lambda}{|x-z|^{n+\alpha}}, 
 \end{equation} 
the operator in (\ref{2lap}) is known as the fractional Laplacian operator that is $(-\Delta)^{\alpha/2}$.  It is by now a well-known fact that the fractional Laplacian operator can be realized as the boundary operator (more precisely the Dirichlet-to-Neumann operator) of a suitable extension function in the half-space, see Caffarelli-Silvestre in \cite{cas}.   In addition to above kernels,  the following truncated kernels that are locally comparable to (\ref{Jumpi}) and have been of great interests as well 
 and with a  finite range 
 \begin{equation}\label{Jumpki}
\frac{c(x-z)}{|x-z|^{n+\alpha}} \mathds{1}_{\{|x-z|\le r_*\}} \le  K  (x-z) \le  \frac{c(x-z)}{|x-z|^{n+\alpha}} \mathds{1}_{\{|x-z|\le R_*\}}, 
 \end{equation} 
 when  $0< r_* \le R_*$.   We also consider the following kernel with decays that are 
\begin{equation}\label{Jumpkei}
K (x,z) = \frac{c (x-z)}{|x-z|^{n+\alpha }} \ \ \ \text{when} \ \ \ |x-z|\le R_* , 
 \end{equation} 
 and 
  \begin{equation}\label{Jumpkeri}
\int_{r<|x-z|<2r} |K   (x,z) | dz \le C D (r)\ \ \ \text{when} \ \ \ r > R_* ,  
 \end{equation} 
where  $0\le D \in C(\mathbb R^+)$ with $\lim_{r\to\infty} D (r)=0$.

The above problems  (\ref{energy}),  (\ref{main}) and (\ref{mainw})  have been of great interests in the literature for various nonlinearity $\Phi$ and kernel $K$.  For the case of general $\Phi$ satisfying 
\begin{equation*} 
\Phi(t) \le \Lambda |t| \ \ \ \text{and} \ \ \ \Phi(t) t \ge t^2,
\end{equation*}
and kernels modeled on the fractional $p$-Laplacian, several studies have been devoted to the regularity and a priori estimates for the weak solutions. In this regard we refer interested readers to  \cite{kms2,ckp1,ckp2} and references therein.  The fractional-type $p$-Laplacian is when $\Phi(t)= \frac{|t|^p}{p}$ for $p \ge 2$. The above equation (\ref{main}) is then of the following form
\begin{equation}\label{plap} 
\int_{\mathbb R^n}   |u(x)-u(y)|^{p-2} [u(x)-u(y)]   K (y-x) dy  =   f(u)   \quad  \text{in} \ \  \mathbb{R}^n . 
\end{equation}
The operator in the above equation with kernel (\ref{fracK}) for $\alpha=ps$ when $0<s<1$ is known as fractional $p$-Laplacian operator that is denoted usually $(-\Delta_p)^s$, see for instance \cite{bcf}. Various properties of solutions of this equation are studied extensively in the literature. Let us mention that there are many other  functions  fulfilling  conditions  on $\Phi$ such as  $\Phi(t)= \sqrt{1+t^2}-1$. For this choice of $\Phi$, the equation (\ref{main}) is of the following form
\begin{equation}\label{curv} 
 \int_{\mathbb R^n} \left[\frac{u(x) - u(y)}{\sqrt{1+|u(x)-u(y)|^2}  }  \right] K (y-x) dy  =   f(u)  \quad  \text{in} \ \  \mathbb{R}^n . 
  \end{equation} 
 The above equation can be seen as the fractional minimal graph equation.  Throughout the article, we assume that there exist positive constant $C$ and a nonnegative constant $\beta$  such that $\beta>\alpha$,  one of the following holds 
\begin{eqnarray}\label{Phiddtb}
\Phi''(t) &\le& C t^{\beta-2}  \ \ \ \text{for} \ \ \ t\in\mathbb R^+, 
\\ \label{Phidtb}
\Phi'(t) &\le& C t^{\beta-1}  \ \ \ \text{for} \ \ \ t\in\mathbb R^+, 
\\ \label{Phibeta}
\Phi(t) &\le& C  t^{\beta}  \ \ \ \text{for} \ \ \ t\in\mathbb R^+, 
\end{eqnarray}
when  $\beta \ge 2$.  Note that for equations (\ref{2lap}),  (\ref{plap}) and (\ref{curv}) the exponent  $\beta$ is given by $\beta=2$,  $\beta=p\ge 2$ and $\beta=2$,  respectively. 

In the current article, we also study the sum of nonlocal operators 
\begin{equation} \label{mainS}
S_{\Phi }[u(x)] = f(u(x))    \quad  \text{in} \ \  \RR^n , 
  \end{equation}   
  when the operator $S_{\Phi }$  stands for 
 \begin{equation*}\label{Ti}
S_{\Phi }[u(x)] : =  \sum_{i=1}^m T^i_{ \Phi_i }[u(x)] , 
  \end{equation*}   
where each $T^i_{ \Phi_i }$ is given by (\ref{}) that is 
 \begin{equation*}\label{TiK}
 T^i_{ \Phi_i }[u(x)] =  \sum_{i=1}^m  \lim_{\epsilon\to 0} \int_{\RR^n\setminus B_\epsilon(x) }  \Phi_i'[u(x) - u(y)] K_i (y-x) dy\ \ \ \ \text{for}\ \ \  m,n\ge1. 
  \end{equation*}  
Needless to say that for the case of $m=1$ operators given in (\ref{mainS}) and (\ref{main}) are equivalent.  The sum operators of the form (\ref{mainS}), and in particular the sum of fractional Laplacian operators, have been studied from both deterministic and probabilistic perspectives. In this regard we refer interested readers to \cite{cs} by Cabr\'{e} and Serra, to \cite{si} by  Silvestre, to \cite{bl} by Bass and Levin and references therein.  We assume that the truncated kernels $K_i$ are of the form 
  \begin{equation}\label{Jumpkis}
\frac{c_i(x-y)}{|x-y|^{n+\alpha_i}} \mathds{1}_{\{|x-z|\le r_i\}} \le  K_i  (x-y) \le  \frac{c_i(x-y)}{|x-y|^{n+\alpha_i}} \mathds{1}_{\{|x-y|\le R_i\}}, 
 \end{equation} 
for $0<r_i\le R_i$,  $\alpha_i>0$ and $0< \lambda_i  \le c_i \le \Lambda_i $.    Note that (\ref{Jumpkis}) is locally comparable to 
\begin{equation} \label{alKal2is}
K_i  (x-y) =   \frac{c_i(x-y)}{|x-y|^{n+\alpha_i}}  . 
\end{equation}
We assume that each $\Phi_i$ satisfies one of the conditions (\ref{Phiddtb})-(\ref{Phibeta}) for $\beta_i\ge 2 $ and $\beta_i>\alpha_i$.   As an example, consider operators $T^1_{\Phi_1}$,  $T^2_{\Phi_2}$ and  $T^3_{\Phi_3}$  given with (\ref{2lap}), (\ref{plap}) and (\ref{curv}), respectively. For these operators we have $\beta_1=2$,  $\beta_2=p$ and $\beta_3=2$ so that (\ref{Phibeta}) holds.  Now, consider  the following nonlocal problem 
  \begin{equation} \label{mainS123}
S_{\Phi }[u(x)]= \sum_{i=1}^3 T^i_{\Phi_i} [u(x)] = f(u(x))  )  \quad  \text{in} \ \  \RR^n. 
  \end{equation}   
Suppose also that $\alpha_1=2s$, $\alpha_2=ps$ and $\alpha_3=2s$ for $0<s<1$ and $p>2$ so that kernels $K_i$ satisfy all of the requirements on indices.   We also consider the following kernel with decays that is  
\begin{equation}\label{Jumpkeis}
K_i (x,z) = \frac{c_i (x-z)}{|x-z|^{n+\alpha_i }} \ \ \ \text{when} \ \ \ |x-z|\le R_i , 
 \end{equation} 
 and 
  \begin{equation}\label{Jumpkeris}
\int_{r<|x-z|<2r} |K_i   (x,z) | dz \le C D_i (r)\ \ \ \text{when} \ \ \ r > R_i ,  
 \end{equation} 
where each $0\le D_i \in C(\mathbb R^+)$ with $\lim_{r\to\infty} D_i (r)=0$.  

  The ideas and methods developed in the current article are strongly motivated by a famous conjecture of De Giorgi (1978) in \cite{DeGiorgi} that states bounded  monotone solutions of the  Allen-Cahn equation must be one-dimensional at least for $n \leq 8$. Here by monotonicity we mean monotonicity in one directions, e.g. $\partial_{x_n} u>0$.  The goal of the present article is to develop symmetry results for stable solutions of semilinear nonlocal equations involving  nonlinear operators described above in lower dimensions. The notion of stable solutions is as follows. 
  \begin{dfn} \label{stable}
A solution $u$ of (\ref{main}) is called stable when there exists  $\phi>0$   such that 
 \begin{equation} \label{L}
L_\Phi [\phi(x)]   =f'(u) \phi(x)   \ \ \ \text{in}\ \ \mathbb R^n, 
  \end{equation} 
  where $L_\Phi (\phi(x)) $ is the linearized operator and given by 
  \begin{equation*}\label{LPhi}
  L_\Phi [\phi(x)] := \lim_{\epsilon\to 0} \int_{\RR^n\setminus B_\epsilon(x) } \Phi''[u(x) - u(y)] [ \phi(x) - \phi(y)]   K (y-x) dy.
  \end{equation*}
\end{dfn} 
Note that stability is weaker assumption that monotonicity.  The De Giorgi's conjecture has been of great interests in the literature for the past decades from mathematical analysis, geometry and mathematical physics perspectives.  The conjecture was solved for $n=2$ by Ghoussoub-Gui in \cite{gg1}, for $n=3$ by Ambrosio-Cabr\'{e}  in \cite{ac} for the Allen-Cahn equation, and later by Alberti-Ambrosio-Cabr\'{e} \cite{aac} for a general nonlinearity.  In higher dimensions, up to some additional natural assumptions, the conjecture is settled by Savin in \cite{savin} and also by Ghoussoub-Gui in \cite{gg2}. In addition, we refer interested readers to  \cite{bar,bbg,fsv,bcn} for related results. A counterexample in dimensions $n\ge 9$ has been obtained by del Pino-Kowalczyk-Wei in \cite{dkw}.  Note also that De Giorgi type results for the case of fractional Laplacian operator are provided in \cite{cc,cc2,csol,hrsv,sv,cabSire} and references therein. 

Here is how this article is structured. In Section  \ref{SecPoi}, we establish a Poincar\'e type inequality for stable solutions of (\ref{main}) with a general kernel $K$. This inequality is inspired by the ones given originally by  Sternberg and Zumbrun in \cite{sz,sz1} and later in \cite{cf,fsv,fsv2,fg}.  In Section \ref{SecOne}, we apply the Poincar\'e inequality to establish our main result that is one-dimensional symmetry of solutions for  (\ref{main}) in two dimensions when the kernel is either with finite range or with decay at infinity.  In Section \ref{SecLio}, we prove a linear Liouville theorem and we apply the theorem to provide a second proof of our main results. In Section \ref{SecEne}, we prove certain energy estimates for layer solutions under various assumptions on kernels. Lastly, in Section \ref{SecSum}, we consider the sum of nonlocal operators examined in previous sections and we provide similar results.

\section{A Poincar\'{e} inequality for stable solutions}\label{SecPoi}

We start this section with a technical lemma that is useful in the forthcoming proofs. 

\begin{lemma}\label{lemtech}
Assume that  operators $T_\Phi$ and $L_{\Phi}$ are given by (\ref{T}) and (\ref{L}) with  a measurable and even  kernel  $K$.   Suppose also that $f,g\in C^1(\mathbb R^n)$. Then, 
\begin{equation}\label{}
\int_{\mathbb R^n} g(x)T_{\Phi}(f(x)) dx=\frac{1}{2} \int_{\mathbb R^n} \int_{\mathbb R^n}   \Phi'[f(x) - f(y)]  \left[g(x)-g(y)  \right] K(x-y) dx dy, 
\end{equation}
and 
\begin{equation}
\int_{\mathbb R^n} g(x)L_{\Phi}(f(x)) dx=\frac{1}{2} \int_{\mathbb R^n} \int_{\mathbb R^n} \Phi''[u(x) - u(y)]    [f(x) - f(y)]  \left[g(x)-g(y)  \right] K(x-y) dx dy. 
\end{equation}
\end{lemma}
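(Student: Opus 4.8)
The plan is to prove both identities by the same mechanism: a symmetrization (Fubini plus the change of variables $x \leftrightarrow y$) that exploits the evenness of $K$ and the oddness of $\Phi'$ (resp. evenness of $\Phi''$). I will treat the two formulas in parallel, since the second is formally the first applied with the ``weight'' $\Phi''[u(x)-u(y)]$ carried along; the only genuinely new point in the second is that $\Phi''$ is an even function, so $\Phi''[u(x)-u(y)] = \Phi''[u(y)-u(x)]$, which is exactly what is needed for the symmetrization to go through.

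First I would write out the left-hand side of the first identity using the definition \eqref{T} of $T_\Phi$, inserting the principal-value integral and interchanging it with the outer integral in $x$:
\begin{equation*}
\int_{\mathbb R^n} g(x)\, T_\Phi[f(x)]\, dx
= \lim_{\epsilon\to 0}\int_{\mathbb R^n}\int_{\mathbb R^n\setminus B_\epsilon(x)} g(x)\,\Phi'[f(x)-f(y)]\, K(y-x)\, dy\, dx .
\end{equation*}
Next I apply Fubini on the (truncated, hence absolutely integrable) double integral and perform the substitution $(x,y)\mapsto(y,x)$ in the resulting expression. Since $K$ is even, $K(y-x)=K(x-y)$ is unchanged, and since $\Phi'$ is odd, $\Phi'[f(y)-f(x)] = -\Phi'[f(x)-f(y)]$. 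Averaging the original expression with its relabelled copy produces the factor $\tfrac12$ and the combination $g(x)-g(y)$ multiplying $\Phi'[f(x)-f(y)]K(x-y)$, which is exactly the claimed right-hand side; the symmetry of the integrand in the pair also lets the truncation $\mathbb R^n\setminus B_\epsilon(x)$ be replaced by the symmetric region $|x-y|>\epsilon$ before passing to the limit. For the second identity one repeats this verbatim with $L_\Phi$ in place of $T_\Phi$, carrying the extra factor $\Phi''[u(x)-u(y)]$; under $x\leftrightarrow y$ this factor is invariant because $\Phi''$ is even, so it behaves like a fixed symmetric weight and the same averaging yields the stated formula.

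The main obstacle, and the only place where care is really needed, is the justification of Fubini and of interchanging the principal-value limit with the integrations — i.e. controlling the integrability of $\Phi'[f(x)-f(y)]K(y-x)$ near the diagonal and at infinity. For the stated hypotheses ($f,g\in C^1(\mathbb R^n)$, together with the structural growth bounds \eqref{Phiddtb}--\eqref{Phibeta} on $\Phi$ relative to the kernel exponent $\alpha$, which guarantee the singular kernel is balanced against $\Phi'[f(x)-f(y)]=O(|x-y|)$ near the diagonal), this is routine but should be spelled out: on the truncated domain $|x-y|>\epsilon$ the integrand is absolutely integrable, so Fubini and the variable swap are legitimate there, and the symmetry of the truncated integrands in $(x,y)$ means the $\epsilon\to 0$ limits on both sides agree. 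I would state these integrability conditions as the standing assumptions under which the identities hold (as is implicit in the paper's framework) rather than re-deriving them, and present the computation as the short symmetrization argument above.
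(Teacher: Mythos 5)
Your proof is correct and is precisely the symmetrization argument (Fubini plus the swap $x\leftrightarrow y$, exploiting the evenness of $K$ and the parity of $\Phi'$ and $\Phi''$) that the paper compresses into a single sentence. You simply spell out the steps, including the integrability/principal-value care that the paper leaves implicit.
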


\begin{proof} These are direct consequences of the fact that $\Phi'$ and $\Phi''$ are odd and even functions, respectively.  
\end{proof}

In what follows we establish a stability inequality that is our main tool to derive a priori estimates on stable solutions. Note that this inequality is valid for a general kernel $K$ and nonlinearity $\Phi$. 

\begin{prop}\label{propstab}  
  Let $u$ be a stable solution of (\ref{main}).  Then,  for any $\zeta\in C_c^1(\mathbb R^n)$, 
\begin{equation} \label{stability}
\int_{\RR^n}  f'(u) \zeta^2(x) dx \le  \frac{1}{2} \iint_{\RR^{2n}}  \Phi''[u(x) - u(y)]   [\zeta(x)- \zeta(y)]^2 K(x-y) dy dx . 
\end{equation} 
\end{prop}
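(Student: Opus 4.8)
The plan is to derive the stability inequality from Definition~\ref{stable} by testing the linearized equation \eqref{L} against a suitable ratio involving the cutoff $\zeta$ and the positive supersolution $\phi$. Concretely, I would start from the defining identity $L_\Phi[\phi] = f'(u)\phi$ in $\mathbb{R}^n$, multiply both sides by $\zeta^2/\phi$, and integrate over $\mathbb{R}^n$; since $\zeta$ has compact support and $\phi>0$, this test function is admissible (modulo a standard approximation/regularity remark). Using the second identity of Lemma~\ref{lemtech} with $f \rightsquigarrow \phi$ and $g \rightsquigarrow \zeta^2/\phi$, the left-hand side becomes $\int_{\mathbb{R}^n} f'(u)\,\zeta^2\,dx$, while the right-hand side becomes
\[
\frac{1}{2}\iint_{\mathbb{R}^{2n}} \Phi''[u(x)-u(y)]\,[\phi(x)-\phi(y)]\left[\frac{\zeta^2(x)}{\phi(x)} - \frac{\zeta^2(y)}{\phi(y)}\right] K(x-y)\,dy\,dx .
\]
So the whole proposition reduces to the pointwise (in $(x,y)$) algebraic inequality
\[
(a-b)\left(\frac{s^2}{a} - \frac{t^2}{b}\right) \le (s-t)^2 \qquad \text{for } a,b>0,\ s,t\in\mathbb{R},
\]
multiplied by the nonnegative weight $\Phi''[u(x)-u(y)]K(x-y)$ and integrated; note $\Phi''>0$ on $\mathbb{R}^+$ and $\Phi''$ is even with $\Phi''(0)\ge 0$, and $K\ge 0$, so the weight is nonnegative and the inequality is preserved under integration.

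The key steps, in order, are: (i) record that $\zeta^2/\phi \in C_c^1$ (or can be approximated by such, using $\phi>0$ and local boundedness/regularity of $\phi$), so Lemma~\ref{lemtech} applies; (ii) expand the left side via Lemma~\ref{lemtech} to get exactly $\int f'(u)\zeta^2$; (iii) prove the elementary inequality $(a-b)(s^2/a - t^2/b)\le (s-t)^2$ — expanding, this is equivalent to $s^2 b/a + t^2 a/b \ge 2st$, i.e. $\big(s\sqrt{b/a} - t\sqrt{a/b}\big)^2 \ge 0$, which is immediate; (iv) multiply by the nonnegative kernel weight and integrate to conclude \eqref{stability}.

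The main obstacle — really the only nontrivial point — is the admissibility of the test function $\zeta^2/\phi$: one must ensure the double integral defining $L_\Phi[\phi]$ paired against $\zeta^2/\phi$ converges and that the integration-by-parts identity of Lemma~\ref{lemtech} is legitimate in this generality. This is handled by noting $\phi$ is bounded below away from zero on the compact support of $\zeta$, so $\zeta^2/\phi$ is Lipschitz with compact support, and by the decay/integrability assumptions on $K$ near the diagonal together with $\Phi'(0)=\Phi''(0)\cdot 0$ controlling the singular part (the $[g(x)-g(y)]$ factor vanishes linearly, and $\Phi''$ is bounded near $0$). Everything else is the algebraic identity above, so I would present the proof as: reduce to the pointwise inequality via Lemma~\ref{lemtech}, then verify $\big(s\sqrt{b/a}-t\sqrt{a/b}\big)^2\ge 0$.
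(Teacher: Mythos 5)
Your proof is correct and follows essentially the same route as the paper's: test the linearized equation $L_\Phi[\phi]=f'(u)\phi$ against $\zeta^2/\phi$, rewrite via Lemma~\ref{lemtech}, and conclude from the pointwise algebraic inequality $(a-b)\bigl(\tfrac{s^2}{a}-\tfrac{t^2}{b}\bigr)\le(s-t)^2$ for $a,b>0$, which is exactly the paper's inequality $(a+b)\bigl(\tfrac{c^2}{a}+\tfrac{d^2}{b}\bigr)\le(c-d)^2$ under the substitution $b\mapsto -b$. Your unpacking of that inequality as a perfect square and your remarks on admissibility of $\zeta^2/\phi$ are slightly more explicit than the paper's, but the strategy and all key steps coincide.
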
  
\begin{proof}
Let $u$ denote a stable solution of (\ref{main}). Then,  there exists a function $\phi$  such that   \begin{equation} \label{L12}
L_\Phi[\phi]= f'(u) \phi   \ \ \ \text{in}\ \ \mathbb R^n. 
  \end{equation} 
Multiply both sides with $\frac{\zeta^2}{\phi}$ where $\zeta$ is a  test functions. Therefore, 
\begin{equation*} \label{}
L_\Phi[\phi] \frac{\zeta^2}{\phi}=  f'(u) \zeta^2   \ \ \ \text{in}\ \ \mathbb R^n.
  \end{equation*} 
 From this and (\ref{L12}) we get 
\begin{equation} \label{LL1}
 \int_{\RR^n}  f'(u(x)) \zeta^2(x)  dx  \le  \int_{\RR^n} L_{\Phi} [\phi(x)] \frac{\zeta^2(x)}{\phi(x)} dx . 
  \end{equation} 
Applying Lemma \ref{lemtech} for the right-hand side of the above, we have
\begin{equation*}\label{Lphi}
\int_{\RR^n} L_\Phi[ \phi(x)] \frac{\zeta^2(x)}{\phi(x)} dx=
\frac{1}{2} \int_{\RR^n} \int_{\RR^n}  \Phi''[u(x) - u(y)]   [\phi(x) - \phi(y)] \left[ \frac{\zeta^2(x)}{\phi(x)}-  \frac{\zeta^2(y)}{\phi(y)} \right] K (x-y) dx dy. 
\end{equation*}
Note that for $a,b,c,d\in\mathbb R$ when $ab<0$ we have 
\begin{equation*}
(a+b)\left[  \frac{c^2}{a} +  \frac{d^2}{b}  \right] \le (c-d)^2    .
\end{equation*}
Since each $\phi$ is positive,  we have $\phi(x)\phi(z)>0$. Setting  $a=\phi(x)$, $b=-\phi(y)$, $c=\zeta(x)$ and  $d=\zeta(y)$ in the above inequality and from the fact that $ab=-\phi(x)\phi(y)<0$, we  conclude 
\begin{equation*}
[\phi(x) - \phi(y)] \left[ \frac{\zeta^2(x)}{\phi(x)}-  \frac{\zeta^2(y)}{\phi(y)} \right] \le [\zeta(x)- \zeta(y)]^2    .
\end{equation*} 
Note that $\Phi''$ is even and $\Phi''>0$ in $\mathbb R^+$.  Therefore, 
\begin{equation*}
\int_{\RR^n} L[\phi(x)] \frac{\zeta^2(x)}{\phi(x)} dx\le  \frac{1}{2} \int_{\RR^n} \int_{\RR^n}  \Phi''[u(x) - u(y)]  [\zeta(x)- \zeta(y)]^2 K(y-x) dy dx.\end{equation*} 
This together with (\ref{LL1}) complete the proof.

\end{proof}

We are now ready to establish a Poincar\'{e} type inequality for stable solutions. As mentioned earlier, the methods and ideas that we apply here are strongly motivated by the ones given in \cite{sz,sz1,cf,fsv,fsv2,fg} and references therein. Note that the following inequality is valid for a vast class of kernels $K$ and nonlinearities $\Phi$. Note also that the function $f$ does not appear in the inequality directly.  

\begin{thm}\label{thmpoin}
 Assume that  $n\ge 1$ and $ u$ is a stable solution of (\ref{main}).  Then,  
\begin{eqnarray}\label{poinsysm1}
&&\iint_{  \mathbb R^{2n}\cap  \{|\nabla_x u|\neq 0\}} \Phi''[u(x) - u(x+y)]     \mathcal A_y(\nabla_x u)  [\eta^2(x)+\eta^2(x+y)] K(y) dx dy 
\\&\le&  
\iint_{  \mathbb R^{2n}} \Phi''[u(x) - u(x+y)]  \mathcal B_y(\nabla_x u) [ \eta(x) - \eta(x+y) ] ^2 K(y) d x dy  , 
  \end{eqnarray} 
for any $\eta \in C_c^1(\mathbb R^{n})$ where 
\begin{eqnarray}\label{mathcalA}
\mathcal A_y(\nabla_x u) &:= & |\nabla_x u(x)|  |\nabla_x u(x+y)| -\nabla_x u(x)  \cdot \nabla_x u(x+y) , 
\\ \label{mathcalB}
 \mathcal B_y(\nabla_x u)  & :=& |\nabla_x u(x)| | \nabla_x u(x+y)| .
 \end{eqnarray}
 \end{thm}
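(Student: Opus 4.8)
The plan is to test the stability inequality (\ref{stability}) from Proposition \ref{propstab} against a cleverly chosen test function, following the classical strategy of Sternberg--Zumbrun for the local case. Since $u$ is a stable solution, for any $\zeta \in C_c^1(\mathbb R^n)$ we have
\begin{equation*}
\int_{\RR^n} f'(u)\zeta^2\,dx \le \frac{1}{2}\iint_{\RR^{2n}} \Phi''[u(x)-u(y)]\,[\zeta(x)-\zeta(y)]^2 K(x-y)\,dy\,dx .
\end{equation*}
The idea is to differentiate the equation (\ref{main}) in each coordinate direction $x_k$: setting $\psi_k := \partial_{x_k} u$, one formally obtains $L_\Phi[\psi_k] = f'(u)\psi_k$, i.e. each derivative solves the linearized equation. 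The crucial point is that this is exactly the relation (\ref{L12}) that yielded Proposition \ref{propstab}; but here I instead plug $\zeta = \psi_k \eta$ directly into (\ref{stability}) (so $\zeta$ may have to be handled by an approximation argument since $\psi_k\eta$ need not be $C_c^1$, but this is a routine cutoff/mollification matter) and compare with the identity obtained by multiplying $L_\Phi[\psi_k]=f'(u)\psi_k$ by $\eta^2$ and integrating using Lemma \ref{lemtech}.

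The key algebraic step is the following pointwise inequality, which replaces the classical $|\nabla(\partial_k u)|^2 - |\nabla|\nabla u||^2 \ge 0$ computation. For fixed $x$ and $y$, write $p = \nabla_x u(x)$, $q = \nabla_x u(x+y)$, and abbreviate $\eta_1 = \eta(x)$, $\eta_2 = \eta(x+y)$. Summing over $k$ and expanding $[\psi_k(x)\eta(x) - \psi_k(x+y)\eta(x+y)]^2$ one gets
\begin{equation*}
\sum_k [\psi_k(x)\eta_1 - \psi_k(x+y)\eta_2]^2 = |p|^2\eta_1^2 + |q|^2\eta_2^2 - 2(p\cdot q)\eta_1\eta_2 ,
\end{equation*}
while from $L_\Phi[\psi_k]=f'(u)\psi_k$ tested against $\psi_k\eta^2$ and summed, the left-hand side of (\ref{stability}) contributes
\begin{equation*}
\tfrac12\iint \Phi''[\cdots]\,\sum_k[\psi_k(x)-\psi_k(x+y)]\big[\psi_k(x)\eta_1^2 - \psi_k(x+y)\eta_2^2\big]K(y)\,dx\,dy = \tfrac12\iint \Phi''[\cdots]\big[|p|^2\eta_1^2 + |q|^2\eta_2^2 - (p\cdot q)(\eta_1^2+\eta_2^2)\big]K(y)\,dx\,dy .
\end{equation*}
Subtracting, the $f'(u)$ terms cancel, $\Phi''\ge 0$ is preserved, and after regrouping one is left with
\begin{equation*}
\iint \Phi''[\cdots]\,\big[(p\cdot q) - |p||q|\big](\eta_1^2+\eta_2^2)\,K(y)\,dx\,dy \le \iint \Phi''[\cdots]\,|p||q|\,(\eta_1-\eta_2)^2\,K(y)\,dx\,dy ,
\end{equation*}
where I have used $2(p\cdot q)\eta_1\eta_2 - (p\cdot q)(\eta_1^2+\eta_2^2) = -(p\cdot q)(\eta_1-\eta_2)^2$ and the elementary bound $|p\cdot q| \le |p||q|$ on the remainder. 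Multiplying by $-1$ and restricting the left integral to $\{|\nabla_x u(x)| \ne 0\}$ (where the integrand $\mathcal A_y \ge 0$ by Cauchy--Schwarz, so dropping the complementary region only decreases the left side) gives precisely (\ref{poinsysm1}).

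The main obstacle I anticipate is twofold and entirely of a technical (not conceptual) nature. First, justifying that $\psi_k = \partial_{x_k} u$ genuinely solves the linearized equation $L_\Phi[\psi_k] = f'(u)\psi_k$ in the nonlocal setting — this requires enough regularity of $u$ to differentiate under the principal-value integral in (\ref{T}) and to commute $\partial_{x_k}$ with the singular integral, using the growth bounds (\ref{Phiddtb})--(\ref{Phibeta}) together with $\beta > \alpha$ to control the kernel singularity and the behavior at infinity. Second, the test function $\zeta = \psi_k\eta$ is only admissible in (\ref{stability}) after an approximation argument (mollify $\psi_k$, use the compact support of $\eta$, and pass to the limit using the same integrability estimates); one must check that all the double integrals appearing are absolutely convergent so that the rearrangements of terms above — which implicitly use Fubini and the evenness of $K$ — are legitimate. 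Once these integrability/regularity matters are dispatched, the inequality (\ref{poinsysm1}) drops out of the pointwise identity just described.
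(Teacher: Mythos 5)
There is a genuine gap: the test function plugged into the stability inequality is the wrong one, and the error is not cosmetic. You choose $\zeta = \psi_k\eta$ with $\psi_k = \partial_{x_k}u$ and sum over $k$. Expanding $\sum_k[\psi_k(x)\eta_1 - \psi_k(x+y)\eta_2]^2$ as you do, the cross term is $-2(p\cdot q)\,\eta_1\eta_2$. The identity coming from testing $L_\Phi[\psi_k]=f'(u)\psi_k$ against $\psi_k\eta^2$ and summing over $k$ produces the cross term $-(p\cdot q)(\eta_1^2+\eta_2^2)$. Subtracting, the $|p|^2\eta_1^2+|q|^2\eta_2^2$ pieces cancel and you are left with
\begin{equation*}
\iint \Phi''[u(x)-u(x+y)]\,(p\cdot q)\,\big(\eta_1-\eta_2\big)^2\,K(y)\,dx\,dy \;\ge\; 0,
\end{equation*}
which is a sign-indefinite statement about $p\cdot q$ weighted by $(\eta_1-\eta_2)^2$. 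This does not imply the theorem: the Poincar\'e inequality compares the $(\eta_1^2+\eta_2^2)$-weighted integral of $\mathcal A_y=|p||q|-p\cdot q$ against the $(\eta_1-\eta_2)^2$-weighted integral of $|p||q|$, and no pointwise bound such as $|p\cdot q|\le|p||q|$ bridges that gap. In fact the display you write after ``after regrouping one is left with'' has a nonpositive left-hand side (since $p\cdot q - |p||q|\le 0$) and a nonnegative right-hand side, so it is trivially true and carries no information; and multiplying it by $-1$ reverses both sides and gives something that is again trivially true rather than (\ref{poinsysm1}).

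The fix is the one the paper uses: test the stability inequality with the \emph{scalar} function $\zeta = |\nabla_x u|\,\eta$ rather than the vector components. Then $[\zeta(x)-\zeta(x+y)]^2 = |p|^2\eta_1^2 + |q|^2\eta_2^2 - 2|p||q|\,\eta_1\eta_2$, so the cross term carries $|p||q|$ instead of $p\cdot q$. Subtracting the same identity from the differentiated equation (that part of your argument is correct and identical to the paper's), the mismatch $|p||q|$ versus $p\cdot q$ in the cross terms is precisely what survives, yielding
\begin{equation*}
\iint \Phi''[\cdots]\,|p||q|\,\eta_1\eta_2\,K \;\le\; \tfrac12\iint \Phi''[\cdots]\,(p\cdot q)\,(\eta_1^2+\eta_2^2)\,K.
\end{equation*}
Inserting $2\eta_1\eta_2 = (\eta_1^2+\eta_2^2)-(\eta_1-\eta_2)^2$ and regrouping produces $\mathcal A_y$ on the left and $\mathcal B_y$ on the right, which is exactly (\ref{poinsysm1}); restricting the left integral to $\{|\nabla_x u|\ne0\}$ loses nothing since $\mathcal A_y$ vanishes when $\nabla_x u(x)=0$. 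Your concluding remarks about justifying $L_\Phi[\psi_k]=f'(u)\psi_k$, approximation of the test function, and absolute convergence are all reasonable, but they are downstream of the structural error in the choice of $\zeta$.
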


\begin{proof} Suppose that  $u$ is a stable solution of (\ref{main}), Proposition \ref{propstab} implies that the stability inequality (\ref{stability}) holds.  Test the stability inequality on $\zeta(x)=|\nabla _x u(x)| \eta(x)$ where $\eta \in C_c^1(\mathbb R^n)$ is a test function to get 
 \begin{eqnarray*} \label{stability}
&& \int_{\RR^n} f'(u) |\nabla_x u (x)|^2   \eta^2(x)  dx 
\\&\le&  \frac{1}{2}  \iint_{\RR^{2n} }  \Phi''[u(x) - u(x+y)]  [ |\nabla_x u(x)|  \eta(x)-  |\nabla_x u(x+y)| \eta(x+y)]^2 K (y) dy dx. 
\end{eqnarray*} 
Expanding the right-hand side of the above inequality,  we get  
 \begin{eqnarray*} \label{Hijstability}
&& \int_{\RR^n} f'(u) |\nabla_x u (x)|^2   \eta^2(x)  dx 
 \\&\le&  
\frac{1}{2} \iint_{\RR^{2n}}  \Phi''[u(x) - u(x+y)]  |\nabla_x u(x)|^2  \eta^2(x) K(y) dy dx
\\&&+ \frac{1}{2} \iint_{\RR^{2n}}   \Phi''[u(x) - u(x+y)]  |\nabla_x u(x+y)|^2  \eta^2(x+y) K(y) dy dx
\\&& -  \iint_{\RR^{2n}}    \Phi''[u(x) - u(x+y)]   |\nabla_x u(x)| |\nabla_x u(x+y)| \eta(x) \eta(x+y) K(y) dy dx. 
\end{eqnarray*} 
We now apply the equation (\ref{main}). Note that for any index $1\le k\le n$ we have 
\begin{eqnarray*}
\partial_{x_k} T_\Phi[u(x)] &=& L_\Phi [ \partial_{x_k} u(x)]
\\&=&
\int_{\RR^n } \Phi''[u(x) - u(y)] [ \partial_{x_k} u(x) - \partial_{x_k} u(x+y)]   K(y-x) dy 
\\&=& f'(u) \partial_{x_k} u(x) . 
\end{eqnarray*}
Multiplying both sides of the above equation with $\partial_{x_k} u(x) \eta^2(x)$  and integrating we have
\begin{equation*}
 \int_{\mathbb R^{n}} f'(u) [\partial_{x_k} u(x)]^2  \eta^2(x)
dx =   \int_{\mathbb R^n} \partial_{x_k} u(x) \eta^2(x) L_\Phi [\partial_{x_k} u(x)   ] dx . 
\end{equation*}
From Lemma \ref{lemtech} we can simplify the right-hand side of the above as  
\begin{equation*}
 \frac{1}{2}  \iint_{\mathbb R^{2n}}  \Phi''[u(x) - u(x+y)] 
\left [ \partial_{x_k} u(x) \eta^2(x) -\partial_{x_k} u(x+y) \eta^2(x+y)\right ]
\left[ \partial_{x_k} u(x) -\partial_{x_k} u(x+y) \right] K(y) dx dy  .
\end{equation*}
Combining the above two equalities, we get 
\begin{eqnarray*}\label{HijIden}
&& \int_{\mathbb R^{n}} f'(u) |\nabla_x u(x)|^2  \eta^2(x)  dx 
\\&=&   \frac{1}{2}    \iint_{\mathbb R^{2n}}  \Phi''[u(x) - u(x+y)]  |\nabla_x u(x)|^2   \eta^2(x)  K(y) dx dy 
 \\&&+  \frac{1}{2}   \iint_{\mathbb R^{2n}}   \Phi''[u(x) - u(x+y)]  |\nabla_x u(x+y)|^2   \eta^2(x+y)  K(y) dx dy
 \\&&   -  \frac{1}{2}   \iint_{\mathbb R^{2n}}   \Phi''[u(x) - u(x+y)]  \nabla_x u(x) \cdot \nabla_x u(x+y)   \eta^2(x)  K(y) dx dy 
 \\&&   -  \frac{1}{2}   \iint_{\mathbb R^{2n}}   \Phi''[u(x) - u(x+y)]  \nabla_x u(x) \cdot \nabla_x u(x+y)   \eta^2(x+y)  K(y) dx dy   . 
\end{eqnarray*}
Combining this and (\ref{Hijstability}) we end up with  
\begin{eqnarray*}\label{HijIden2}
 &&  \iint_{\mathbb R^{2n}}   \Phi''[u(x) - u(x+y)]   |\nabla_x u(x)| |\nabla_x u(x+y)| \eta(x) \eta(x+y) K(y) dy dx 
\\&\le&  
  \frac{1}{2}   \iint_{\mathbb R^{2n}}   \Phi''[u(x) - u(x+y)]  \nabla_x u(x) \cdot \nabla_x u(x+y)  \left[ \eta^2(x) +\eta^2(x+y) \right]  K(y) dx dy  .
\end{eqnarray*}
Using the fact that $ \eta(x) \eta(x+y)  =\frac{1}{2} [\eta^2(x) +\eta^2(x+y)] -  \frac{1}{2} \left[ \eta(x) -\eta(x+y) \right]^2 $ and regrouping terms we get the desired result. 
 
\end{proof}

\section{One-dimensional symmetry: via a Poincar\'e inequality}\label{SecOne}

In this section, we apply the Poincar\'e inequality, given in former section, to establish one-dimensional symmetry results for bounded stable solution of  (\ref{main}) in two dimensions. Due to mathematical techniques and ideas that we apply in the proof, we assume that the kernel $K$ is of finite range or with certain decay at infinity. 
 
\begin{thm}\label{Thmain} Suppose that $u$  is a bounded stable solution of  (\ref{main}) in two dimensions  and (\ref{Phiddtb}) holds. Assume also that  the kernel $K$ satisfies  either (\ref{Jumpki}) or (\ref{Jumpkei}) and (\ref{Jumpkeri}) with $D(r)<C r^{-\theta}$ for $\theta>\beta+1$.    Then,  $u$ must be a one-dimensional function.   
  \end{thm}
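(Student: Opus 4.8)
The strategy is the standard De Giorgi scheme adapted to the nonlocal setting: one feeds into the Poincar\'e inequality of Theorem~\ref{thmpoin} a logarithmic cut-off tailored to two dimensions, shows that the right-hand side tends to zero, and reads off that the (nonnegative) integrand $\Phi''[\,\cdot\,]\,\mathcal A_y(\nabla_x u)$ must vanish wherever $K>0$. Concretely, for $R>e$ set $\eta_R=1$ on $B_R$, $\eta_R(x)=\log(R^2/|x|)/\log R$ on $B_{R^2}\setminus B_R$, and $\eta_R=0$ outside $B_{R^2}$; after a routine mollification $\eta_R\in C_c^1(\mathbb R^2)$, $\eta_R\uparrow 1$ pointwise, and a direct computation gives $\|\nabla\eta_R\|_{L^2(\mathbb R^2)}^2=O(1/\log R)$. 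Inserting $\eta_R$ into \eqref{poinsysm1} yields, for every $R$,
\begin{equation*}
\iint_{\mathbb R^{4}\cap\{|\nabla_x u|\neq0\}}\Phi''[u(x)-u(x+y)]\,\mathcal A_y(\nabla_x u)\,\bigl[\eta_R^2(x)+\eta_R^2(x+y)\bigr]K(y)\,dx\,dy\ \le\ \mathcal R(R),
\end{equation*}
where $\mathcal R(R)$ denotes the right-hand side of \eqref{poinsysm1} with $\eta=\eta_R$. Since $\Phi''>0$ on $\mathbb R^+$, $K\ge0$ and $\mathcal A_y(\nabla_x u)\ge0$ by Cauchy--Schwarz, the whole argument reduces to proving $\mathcal R(R)\to0$ as $R\to\infty$ and then identifying the limiting equality.

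Before estimating $\mathcal R(R)$ I would record the regularity input: a bounded solution of \eqref{main} for the kernels under consideration is globally Lipschitz, so $\|\nabla u\|_{L^\infty(\mathbb R^2)}=:C_1<\infty$ and $\nabla u\in C^0$ (see the a priori estimates in \cite{kms2,ckp1,ckp2,bcf}). By \eqref{Phiddtb}, evenness of $\Phi''$ and $\beta\ge2$ one has the pointwise bound
\begin{equation*}
\Phi''[u(x)-u(x+y)]\ \le\ C\,\min\bigl\{(C_1|y|)^{\beta-2},\,(2\|u\|_\infty)^{\beta-2}\bigr\},
\end{equation*}
while $\int_{\mathbb R^2}[\eta_R(x)-\eta_R(x+y)]^2\,dx\le\min\{|y|^2\|\nabla\eta_R\|_{L^2}^2,\,4\|\eta_R\|_{L^2}^2\}$ with $\|\eta_R\|_{L^2}^2\le\pi R^4$, and $|\nabla u(x)||\nabla u(x+y)|\le C_1^2$. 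I would split the $y$-integral into $\{|y|\le R_*\}$ and $\{|y|>R_*\}$. On the first region $K(y)\le\Lambda|y|^{-2-\alpha}$; using the first entry in each minimum the integrand is $\lesssim\|\nabla\eta_R\|_{L^2}^2\,|y|^{\beta-2-\alpha}$, which is integrable near the origin precisely because $\beta>\alpha$, so this contributes $O(1/\log R)$. On the second region I decompose dyadically, $\{|y|>R_*\}=\bigcup_{j\ge0}\{2^jR_*\le|y|<2^{j+1}R_*\}$, bound $\int_{2^jR_*\le|y|<2^{j+1}R_*}|K(y)|\,dy\le C D(2^jR_*)\le C(2^jR_*)^{-\theta}$ via \eqref{Jumpkeri}, pair this with the bounds above (using $\min\{|y|^2\|\nabla\eta_R\|_{L^2}^2,\pi R^4\}$ according to whether $|y|\lesssim R^2\sqrt{\log R}$), and sum the resulting geometric series. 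The series converges and vanishes as $R\to\infty$ under the standing hypothesis $\theta>\beta+1$. In the finite-range case \eqref{Jumpki} the second region is empty and only the first estimate is used. I expect this far-field estimate --- balancing the loss $\|\eta_R\|_{L^2}^2\sim R^4$ on distant annuli against the decay rate of $D$ --- to be the main obstacle, and it is exactly what dictates the quantitative condition on $\theta$.

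Granting $\mathcal R(R)\to0$, Fatou's lemma (using $\eta_R\uparrow1$ pointwise and nonnegativity of the integrand, so that $\eta_R^2(x)+\eta_R^2(x+y)\to2$) forces
\begin{equation*}
\iint_{\mathbb R^{4}}\Phi''[u(x)-u(x+y)]\,\mathcal A_y(\nabla_x u)\,K(y)\,dx\,dy=0,
\end{equation*}
hence $\mathcal A_y(\nabla_x u)(x)=0$ for a.e.\ $(x,y)$ with $K(y)>0$; equivalently, $\nabla u(x)$ and $\nabla u(x+y)$ are positively parallel whenever both are nonzero, for a.e.\ such pair. Since $\nabla u$ is continuous and $K>0$ throughout a ball $B_{r_0}$ (by \eqref{Jumpki} or \eqref{Jumpkei}), the bilinear quantity $|\nabla u(x)||\nabla u(x')|-\nabla u(x)\cdot\nabla u(x')$ vanishes for \emph{all} $|x-x'|<r_0$, so the unit field $\sigma=\nabla u/|\nabla u|$ is invariant under every translation of length $<r_0$ on each connected component of $\{\nabla u\neq0\}$ and is therefore locally constant there; propagating this through chains of overlapping balls of radius $<r_0/2$ (and using once more that $K>0$ near the origin to rule out distinct directions on separated components) shows that $\nabla u(x)$ is everywhere parallel to a fixed line $\mathbb Re$. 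Consequently $\partial_\tau u\equiv0$ for every $\tau\perp e$, i.e.\ $u(x)=g(e\cdot x)$ for some $g\in C^1(\mathbb R)$, which is the asserted one-dimensional symmetry (the case $\nabla u\equiv0$ being trivial). This concluding step parallels the classical arguments of \cite{gg1,sz,sz1}.
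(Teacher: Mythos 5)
Your proof is correct and is built on the same backbone as the paper's: the Poincar\'e inequality of Theorem~\ref{thmpoin} tested against a two-dimensional logarithmic cutoff, with the right-hand side shown to vanish as $R\to\infty$. The main place where you diverge is in how you estimate that right-hand side. The paper decomposes $\mathbb R^2\times\mathbb R^2$ into six explicit pieces $\Omega^1_R,\dots,\Omega^6_R$ based on where $x$ and $y$ sit relative to $B_{\sqrt R}$ and $B_R$, then controls $[\eta(x)-\eta(y)]^2$ on each piece via the elementary inequality $|\log b-\log a|^2\le|b-a|^2/(ab)$. You instead pull the $x$-integral inside, invoke the translation estimate $\int_{\mathbb R^2}[\eta_R(x)-\eta_R(x+y)]^2\,dx\le\min\{|y|^2\|\nabla\eta_R\|^2_{L^2},\,4\|\eta_R\|^2_{L^2}\}$, and split only the remaining $y$-integral into near- and far-field; this eliminates the casework and makes the role of $\|\nabla\eta_R\|^2_{L^2}=O(1/\log R)$ transparent. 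A by-product of your route is that, by using the constant bound $\Phi''\le C(2\|u\|_\infty)^{\beta-2}$ on the far annuli rather than the growing bound $C|y|^{\beta-2}$, the dyadic sum converges under $\theta>2$ alone, which for $\beta\ge 2$ is weaker than the stated $\theta>\beta+1$ --- worth noting if you care about sharpness of the decay hypothesis. Two further remarks: (i) you make explicit the Lipschitz regularity of $u$ that the paper uses tacitly when it writes $|u(x)-u(y)|\le C|x-y|$ with ``$C$ depending on $\|u\|_\infty$''; this is the right thing to do, and your appeal to \cite{kms2,ckp1,ckp2,bcf} is the appropriate justification. (ii) Both your argument and the paper's skate past the fact that $\Phi''(0)$ may vanish (e.g.\ $\Phi(t)=|t|^p/p$, $p>2$), so the passage from $\iint\Phi''\,\mathcal A_y\,K=0$ to $\mathcal A_y=0$ strictly only gives $\mathcal A_y(\nabla_x u)=0$ on the set where $u(x)\ne u(x+y)$ and $K(y)>0$; this is enough, but it deserves a word --- say, by continuity of $\nabla u$ and the fact that, for fixed $x$ with $\nabla u(x)\neq0$, the level set $\{y:u(x+y)=u(x)\}$ has measure zero near $y=0$. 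Your concluding propagation of the direction field through overlapping balls is more careful than the paper's terse ``this finishes the proof'' and is a welcome addition.
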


\begin{proof} 
From the Poincar\'{e} inequality (\ref{poinsysm1}),  we have 
\begin{eqnarray}\label{PhiAB}
&&\iint_{  \mathbb R^{2n}\cap  \{|\nabla_x u|\neq 0\}} \Phi''[u(x) - u(x+y)]     \mathcal A_y(\nabla_x u)  [\eta^2(x)+\eta^2(x+y)] K(y) dx dy 
\\&\le& \nonumber C   \iint_{  \mathbb R^{2n}}  \Phi''[u(x) - u(y)]   \left[ \eta(x) - \eta(y) \right] ^2 K(x-y) d x dy,
  \end{eqnarray} 
where $\mathcal A_y(\nabla_x u):=  |\nabla_x u(x)|  |\nabla_x u(x+y)| -\nabla_x u(x)  \cdot \nabla_x u(x+y) \ge 0$ for all $x,y$ and C is a positive constant depending on $|| \nabla_x u||_{\infty}$. Since   $\Phi''$ satisfies (\ref{Phiddtb}), we have 
\begin{equation*}
\Phi''[u(x) - u(y)] \le C |  u(x) - u(y) |^{\beta-2}. 
\end{equation*}
for $\beta\ge 2$ and $\beta>\alpha$.  Note that $|u(x) - u(y)|\le C |x-y|$ when $C$ is a positive constant depending only on $||u||_{\infty}$. Therefore, 
\begin{equation*}
\Phi''[u(x) - u(y)] \le C |  x-y |^{\beta-2}. 
\end{equation*}   
From this,   (\ref{PhiAB}) and the assumptions on the kernel, we get 
\begin{eqnarray}\label{PhiAB1}
&&\iint_{  \mathbb R^{2n}\cap  \{|\nabla_x u|\neq 0\}} \Phi''[u(x) - u(x+y)]     \mathcal A_y(\nabla_x u)  [\eta^2(x)+\eta^2(x+y)] K(y) dx dy 
\\&\le& \label{PhiB2} C   \iint_{\mathbb R^{2n} }   \left[ \eta(x) - \eta(y) \right] ^2 |  x-y |^{\beta-2} K(x-y) d x dy.   
  \end{eqnarray} 
   We  now test the above inequality  on the following standard test function 
\begin{equation*}
\eta (x):=\left\{
                      \begin{array}{ll}
                        \frac{1}{2}, & \hbox{if $|x|\le\sqrt{R}$,} \\
                      \frac{ \log {R}-\log {|x|}}{{\log R}}, & \hbox{if $\sqrt{R}< |x|< R$,} \\
                       0, & \hbox{if $|x|\ge R$.}
                                                                       \end{array}
                    \right.
 \end{equation*} 
 Suppose that   $ \Omega_R:=\cup_{i=1}^6 \Omega^i_R
$ where 
\begin{eqnarray*}
&& \Omega^1_R:=B_{\sqrt R}\times (B_ R\setminus B_{\sqrt R}), \  \Omega^2_R:=(B_ R\setminus B_{\sqrt R})\times (B_ R\setminus B_{\sqrt R}),  \  \Omega^3_R:=(B_ R\setminus B_{\sqrt R})\times (\mathbb R^n\setminus B_R),\ 
\\&& \Omega^4_R:=B_{\sqrt R}\times (\mathbb R^n\setminus B_R) , \ \Omega^5_R:=B_{\sqrt R}\times B_{\sqrt R},  \
\Omega^6_R:=(\mathbb R^n\setminus B_R)\times (\mathbb R^n\setminus B_R)  .
\end{eqnarray*}
From the definition of test function $\eta$ we have $|\eta(x)-\eta(y)|=0$ on $\Omega^5_R$ and $\Omega^6_R$.  We now apply this in (\ref{PhiAB1}) to get 
\begin{eqnarray}\label{intIJR}
&&  \iint_{ \{\mathbb R^n \times B_{\sqrt R}\} \cap \{|\nabla_x u|\neq 0\} } \Phi''[u(x) - u(x+y)]    \mathcal A_y(\nabla_x u) K(y)  dx dy 
 \\&\le& \nonumber C \sum_{i=1}^4  \iint_{\Omega^i_R \cap |x-y|\le R_* }  \left[ \eta(x) - \eta(y) \right] ^2 |  x-y |^{\beta-2-n-\alpha} d x dy  
\\&& \nonumber +C \sum_{i=1}^4  \iint_{\Omega^i_R \cap |x-y| > R_* }  \left[ \eta(x) - \eta(y) \right] ^2 |  x-y |^{\beta-2} K(x-y) d x dy  \\&=:&\label{sumgam}
C \sum_{i=1}^4 I_i(R) + C \sum_{i=1}^4 J_i(R). 
  \end{eqnarray} 
Applying properties of the test function $\eta$ to compute an upper bound for each $ I_i(R)$ and $J_i(R)$.  
In this regard, we use the following straightforward inequality 
\begin{equation}\label{logab}
|\log b - \log a|^2 \le \frac{1}{ab} |b-a|^2,
\end{equation}
where $a,b\in\mathbb R^+$. We now consider various cases based on the domains.  

\noindent {\bf Case 1}: Let $(x,y)\in \Omega^1_R \cap |x-y|\le R_*$.   Without loss of generality, we assume that $x\in B_{\sqrt{R}}\setminus  B_{\sqrt{R}-R_*}$ and $y\in B_ {\sqrt{R}+R_*} \setminus B_{\sqrt R}$.  Note that $\eta(x)=\frac{1}{2}$ and $\eta(y)=1-\frac{\log |y|}{\log R}$. Applying (\ref{logab}) and the fact that $|x|< \sqrt R \le |y|$ we get 
\begin{eqnarray*}
|\eta(x) -\eta(y)|^2 &=& \frac{1}{\log^2 R} |\log |y| - \log \sqrt R|^2
\le \frac{1}{\log^2 R} \frac{1}{|y| \sqrt R}  | |y| -  \sqrt R|^2 
\\ &\le& \frac{1}{R\log^2 R}   | |y| -  |x||^2 \le \frac{1}{R\log^2 R}   | y -  x|^2   .
\end{eqnarray*}
From this for kernels satisfying (\ref{Jumpki}) we have
\begin{eqnarray*}
I_1(R) 
&\le & \frac{C }{R\log^2 R} \left[ \int_{B_{\sqrt R}\setminus B_{\sqrt R-R_*}} dx \right]\left[\int_{B_{R_*}}  |z|^{\beta-n-\alpha} dz \right]
\\&\le &  \frac{C}{\beta-\alpha} \frac{R_*^{\beta-\alpha}}{\sqrt R \log^2 R} , 
\end{eqnarray*} 
where we have used the assumptions $\beta-\alpha>0$ and $n=2$. We now consider kernels satisfying (\ref{Jumpkei})-(\ref{Jumpkeri}) with $D(r)< C r^{-\theta}$ when $\theta>\beta+1$. Therefore,  
 \begin{eqnarray}\label{J1R}
J_1(R) & \le & \frac{C}{R\log^2 R} \left[ \int_{ B_{\sqrt{R}} \setminus B_{\sqrt{R}-R_*} } dx\right] \left[ \sum_{k=1}^\infty \int_{k R_* <|z|<2k R_*} |z|^\beta K(z) dz \right]
\\&\le&\nonumber \frac{C R_*^{\beta-\theta}}{R\log^2 R}   \left[ \int_{ B_{\sqrt{R}} \setminus B_{\sqrt{R}-R_*} }  dx \right] \left[ \sum_{k=1}^\infty k^{\beta-\theta}\right] \le  \frac{C R_*^{\beta-\theta }}{\sqrt R \log^2 R}. 
\end{eqnarray}

\noindent {\bf Case 2}: Suppose that  $(x,y)\in \Omega^2_R \cap |x-y|\le R_*$.  Without loss of generality we assume that $|x|\le |y|$. Since $x,y\in B_ R\setminus B_{\sqrt R}$,  we have 
\begin{equation*}
|\eta(x) -\eta(y)|^2 = \frac{1}{\log^2 R} |\log |y| - \log |x||^2 \le \frac{1}{\log^2 R} \frac{1}{|x| |y|}  | |y| -  |x||^2 \le \frac{1}{|x|^2\log^2 R}   | y -  x|^2   .
\end{equation*}
From this for kernels (\ref{Jumpki})  we conclude  
\begin{eqnarray*}
I_2(R) &\le& 
\frac{C}{\log^2 R}  \int_{B_{ R}\setminus B_{\sqrt R}} \frac{1}{|x|^2} dx   \int_{B_{R_*}}  |z|^{\beta-n-\alpha} dz  
\le \frac{C }{\log^2 R}  \int_{\sqrt R}^R r^{n-3}dr \int_{0}^{R_*}  r^{\beta-1-\alpha} dr 
\\&\le &  \frac{C}{\beta-\alpha} \frac{R_*^{\beta-\alpha}}{\log R} , 
\end{eqnarray*} 
and again we have used the assumptions $\beta-\alpha>0$ and $n=2$. On the other hand, for kernels  satisfying (\ref{Jumpkei}) and (\ref{Jumpkeri}) with decay $D(r)< C r^{-\theta}$ when $\theta>3$  we have 
\begin{eqnarray}\label{J2R}
J_2(R) &\le&  \frac{C}{\log^2 R} \left[ \int_{B_{ R}\setminus B_{\sqrt R}} \frac{dx}{|x|^2} \right] \left[ \sum_{k=1}^\infty \int_{k R_*<|z|<2k R_*} |z|^\beta K(z) dz\right]
\\&\le& \nonumber \frac{C R_*^{\beta-\theta}}{\log^2 R} \left[  \int_{\sqrt R}^R r^{n-3}dr  \right]  \left[\sum_{k=1}^\infty k^{\beta-\theta}\right] \le  \frac{C R_*^{\beta-\theta }}{\log R}. 
\end{eqnarray}

\noindent {\bf Case 3}: Suppose that  $(x,y)\in \Omega^3_R \cap |x-y|\le R_*$. Without loss of generality we assume that $x\in B_{R}\setminus  B_{R-R_*}$ and $y\in B_ {{R}+R_*} \setminus B_{R}$ for large enough $R$.  Therefore,  $\eta(x)=1-\frac{\log |x|}{\log R}$ and $\eta(y)=0$. Applying (\ref{logab}) and the fact that $|x|<  R \le |y|$,  we have  
\begin{eqnarray*}
|\eta(x) -\eta(y)|^2 &=& \frac{1}{\log^2 R} |\log |x| - \log  R|^2 \le \frac{1}{\log^2 R} \frac{1}{|x| R}  | |x| -   R|^2  \le \frac{1}{|x|^2 \log^2 R}   | |y| -  |x||^2 
\\&\le& \frac{1}{|x|^2\log^2 R}   | y -  x|^2  . 
\end{eqnarray*}
We first assume that (\ref{Jumpki})  and we provide the following upper bound   
\begin{eqnarray*}
I_3(R) &\le&  
\frac{C}{\log^2 R}  \int_{B_{ R}\setminus B_{R-R_*}} \frac{1}{|x|^2} dx   \int_{B_{R_*}}|z|^{\beta-n-\alpha} dz 
\\&\le &  \frac{C}{\beta-\alpha} \frac{R_*^{\beta-\alpha}}{\log^2 R} . 
\end{eqnarray*} 
Assume that (\ref{Jumpkei}) and (\ref{Jumpkeri}) hold when $D(r)< C r^{-\theta}$ for $\theta>\beta+1$. Then,  
\begin{eqnarray}\label{J3R}
J_3(R) \le   \frac{C R_*^{\beta-\theta }}{\log R}.
\end{eqnarray}

\noindent {\bf Case 4}: Suppose that  $(x,y)\in \Omega^4_R$. Note that $\eta(x)=\frac{1}{2}$ and $\eta(y)=0$ and  $|x-y|>R-\sqrt R>R_*$ for   large enough $R$.  This implies that $I_4(R)=0$ for either (\ref{Jumpki}) or  (\ref{Jumpkei})-(\ref{Jumpkeri}).  Note also that $J_4(R)=0$ provided (\ref{Jumpki}).  Therefore, we assume  that the kernel satisfy the decay assumptions (\ref{Jumpkei})-(\ref{Jumpkeri}) and 
\begin{equation*}\label{J4R}
J_4(R) = \frac{1}{2} \int_{ B_{\sqrt R}}  dx   \sum_{k=1}^\infty \int_{k(R-\sqrt R)<|z|<2k (R-\sqrt R)}|z|^{\beta-2} K(z) dz \le \frac{C R}{(R-\sqrt R)^{\theta-\beta+2 }}  \sum_{k=1}^\infty k^{\beta-2-\theta } 
 \le  \frac{C}{ R^{\theta -\beta+1} }. 
\end{equation*} 
From the above  cases and (\ref{intIJR}), we get 
  \begin{equation*}\label{}
 \frac{1}{2} \iint_{ \{\mathbb R^2 \times B_{\sqrt R}\} \cap \{|\nabla_x u|\neq 0\} } \Phi''[u(x) - u(x+y)]      \mathcal A_y(\nabla_x u) K(y)  dx dy 
 \le\frac{C}{\beta-\alpha} \frac{R_*^{\beta-\alpha}}{\log R} \ \ \ \text{for large} \ R.
\end{equation*}
Sending $R\to\infty$ and applying the fact that $\mathcal A_y(\nabla_x u)\ge 0$ for all $x,y\in\mathbb R^2$, we get
 \begin{equation*}
  \Phi''[u(x) - u(x+y)]   \mathcal A_y(\nabla_x u) K(y) = 0 \ \ \text{a.e. for all} \ \  x,y\in\mathbb R^2. 
  \end{equation*}
   Since $u$ is not constant and $\Phi''$ is an even function, we have $\Phi''(u(x) - u(x+y) ) >0$.  Therefore,  $\mathcal A_y(\nabla_x u) = 0$ for all $x\in\mathbb R^2$ and $y\in B_{r_*}$. This implies that 
\begin{equation*}
|\nabla_x u(x)|  |\nabla_x u(x+y) |=\nabla_x u(x)  \cdot \nabla_x u(x+y) , 
\end{equation*}
when $|\nabla_x u|\neq 0$.   The above is equivalent to 
\begin{equation*}
u_{x_1}(x) u_{x_2}(x+y)= u_{x_1}(x+y) u_{x_2}(x), 
\end{equation*}
and 
  \begin{equation*}\label{unabla}
\nabla_x u(x) \cdot \nabla_x^\perp u(x+y)=0. 
\end{equation*}
This finishes the proof.    \end{proof}

For the rest of this section, we provide a Liouville theorem for solutions of (\ref{main}) under some sign assumptions on the function $f$. 

\begin{thm}\label{thmliouville}
Let $ u$ be a bounded solution of (\ref{main}) when the kernel $K$ satisfies    either (\ref{Jumpki}) or  (\ref{Jumpkei})-(\ref{Jumpkeri}) with $D(r)< C r^{-\theta}$ when $\theta>\beta+1$. Suppose that $\Phi$ satisfies (\ref{Phidtb}).   If $f(u) \ge0$ or $uf(u)\le 0$, then $u$ must be constant provided   $n\le \beta$.   
\end{thm}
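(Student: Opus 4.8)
The plan is to test equation (\ref{main}) against $g=(u-c)\,\eta_R^{\beta}$ with $\eta_R$ a suitable cutoff and $c$ a constant chosen so that $(u-c)f(u)\le0$, to extract a nonnegative ``Dirichlet energy'' $\mathcal E_R$, and to let $R\to\infty$; since $\mathcal E_R$ is bounded below by a fixed positive constant whenever $u$ is non-constant, showing $\mathcal E_R\to0$ forces $u$ to be constant. Take $c=0$ in the case $uf(u)\le0$ and $c=\sup_{\mathbb R^n}u$ in the case $f(u)\ge0$ (then $u-c\le0$); since $T_{\Phi}$ and the increments $u(x)-u(y)$ are insensitive to additive constants, $c$ plays no role except through the sign of $(u-c)f(u)$. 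Let $\eta_R$ be the logarithmic cutoff used in the proof of Theorem \ref{Thmain} (smooth it if desired, or note that Lemma \ref{lemtech} extends to compactly supported Lipschitz functions; throughout I assume the regularity of $u$ that makes $T_\Phi[u]$ pointwise finite, e.g.\ $u$ locally Lipschitz). Recall $\Phi'[t]\,t=|\Phi'[t]|\,|t|\ge0$ since $\Phi'$ is odd with $\Phi''>0$. Testing (\ref{main}) with $g=(u-c)\eta_R^{\beta}$, using Lemma \ref{lemtech}, the identity $a(x)b(x)-a(y)b(y)=\tfrac12(b(x)+b(y))(a(x)-a(y))+\tfrac12(a(x)+a(y))(b(x)-b(y))$ with $a=u-c$, $b=\eta_R^{\beta}$, and $\int(u-c)\eta_R^{\beta}f(u)\le0$, one gets
\[
0\ \le\ \mathcal E_R:=\iint_{\mathbb R^{2n}}\Phi'[u(x)-u(y)]\,(u(x)-u(y))\,\big(\eta_R^{\beta}(x)+\eta_R^{\beta}(y)\big)\,K(x-y)\,dx\,dy
\]
together with $\mathcal E_R\le C\|u\|_{\infty}\iint_{\mathbb R^{2n}}|\Phi'[u(x)-u(y)]|\,|\eta_R^{\beta}(x)-\eta_R^{\beta}(y)|\,K(x-y)\,dx\,dy$. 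Moreover $\mathcal E_R<\infty$: near the diagonal its integrand is $O(|x-y|^{\beta-n-\alpha})$, integrable because $\beta>\alpha$, and away from the diagonal it is $\lesssim K(x-y)$, integrable since $K$ is finitely supported or decays.

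Next I would apply Hölder's inequality to the cross term with exponents $\tfrac{\beta}{\beta-1}$ and $\beta$, after writing $|\eta_R^{\beta}(x)-\eta_R^{\beta}(y)|\le\beta(\eta_R^{\beta-1}(x)+\eta_R^{\beta-1}(y))|\eta_R(x)-\eta_R(y)|$ and splitting so that the factor of exponent $\tfrac{\beta}{\beta-1}$ rebuilds the integrand of $\mathcal E_R$; the remaining factor, of exponent $\beta$, equals $C\,\frac{|\Phi'[u(x)-u(y)]|}{|u(x)-u(y)|^{\beta-1}}|\eta_R(x)-\eta_R(y)|^{\beta}$, which is $\le C|\eta_R(x)-\eta_R(y)|^{\beta}$ by (\ref{Phidtb}). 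This yields $\mathcal E_R\le C\,\mathcal E_R^{(\beta-1)/\beta}\,\tilde{\mathcal D}_R^{1/\beta}$, hence, since $0\le\mathcal E_R<\infty$,
\[
\mathcal E_R\ \le\ C\,\tilde{\mathcal D}_R,\qquad \tilde{\mathcal D}_R:=\iint_{\mathbb R^{2n}}|\eta_R(x)-\eta_R(y)|^{\beta}\,K(x-y)\,dx\,dy .
\]

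It then remains to show $\tilde{\mathcal D}_R\to0$ as $R\to\infty$ when $n\le\beta$, for which I would reuse the decomposition $\mathbb R^{2n}=\bigcup_i\Omega_R^i$ and the inequality (\ref{logab}) from the proof of Theorem \ref{Thmain}, with $|\eta_R(x)-\eta_R(y)|$ now raised to the power $\beta$ instead of $2$. On the main region $\Omega_R^2$, after the inner integration (which converges, contributing $\int_0^{R_*}r^{\beta-1-\alpha}\,dr$, so $\beta>\alpha$ is used), one is left with a term of order $(\log R)^{-\beta}\int_{\sqrt R}^{R}r^{\,n-1-\beta}\,dr$, equal to $O((\log R)^{-(\beta-1)})$ when $n=\beta$ and $O(R^{(n-\beta)/2}(\log R)^{-\beta})$ when $n<\beta$; in both cases it vanishes as $R\to\infty$, and this is exactly where $n\le\beta$ is needed. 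The regions $\Omega_R^1,\Omega_R^3,\Omega_R^4$ and the far-from-diagonal pieces are estimated just as the terms $J_i(R)$ in Theorem \ref{Thmain}, the hypothesis $D(r)<Cr^{-\theta}$ with $\theta>\beta+1$ forcing them to $0$. Hence $\mathcal E_R\to0$. On the other hand, if $u$ were not a.e.\ constant there would be a ball on which $u$ is non-constant, hence a positive-measure set of pairs $(x,y)$ with $|x-y|<r_*$ (so $K(x-y)>0$) and $u(x)\neq u(y)$ (so $\Phi'[u(x)-u(y)](u(x)-u(y))>0$, using $\Phi''>0$); since $\eta_R\equiv\tfrac12$ on that ball for large $R$, this would give $\mathcal E_R\ge c_0>0$ for all large $R$, a contradiction. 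Therefore $u$ is constant.

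The main difficulty is the borderline dimension $n=\beta$: a plain cutoff $\eta(\cdot/R)$ produces $\tilde{\mathcal D}_R\sim R^{\,n-\beta}$, which decays only for $n<\beta$, so one must use the logarithmic cutoff with its increments raised to the exponent $\beta$ (matching the homogeneity encoded in (\ref{Phidtb})) and carry the six-region bookkeeping of Theorem \ref{Thmain} through for general $n\le\beta$ — in particular make every far-from-diagonal contribution vanish under $\theta>\beta+1$. A secondary technical point is justifying the test-function manipulation above for the a priori only weakly regular solution $u$, which should follow from interior regularity for these operators.
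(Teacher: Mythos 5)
Your proposal is correct and reaches the conclusion, but it diverges from the paper's proof in a way worth flagging. The overall strategy is the same: test \eqref{main} with $(u-c)\eta^{\beta}$ (the paper writes $\eta^{2m}$ with $m=\beta/2$ and $c=\|u\|_{\infty}$), use Lemma \ref{lemtech} plus the ``add and subtract'' identity, exploit $\Phi'(t)t\ge0$, apply H\"older with exponents $\tfrac{\beta}{\beta-1}$ and $\beta$, and convert $|\Phi'|^{\beta/(\beta-1)}$ to $\Phi'(u(x)-u(y))(u(x)-u(y))$ via \eqref{Phidtb}. The key point of departure is the cutoff. The paper uses the \emph{plain} radial cutoff ($\eta\equiv1$ on $B_R$, $\eta\equiv0$ off $B_{2R}$, $|\nabla\eta|\lesssim R^{-1}$); with that choice, its bound on $\tilde{\mathcal D}_R=\iint|\eta(x)-\eta(y)|^{\beta}K$ is only $O(1)$ when $n=\beta$ (see the paper's Case~2, where the estimate scales like $R^{n-\beta}$), not $o(1)$, so the jump from ``$\mathcal E_R\le C$'' to ``$\iint\Phi'(u(x)-u(y))(u(x)-u(y))K=0$'' in the last two displays is compressed: one must first deduce that the full bilinear energy is finite, then observe that the H\"older factor over the transition annulus (the only region contributing to the left side) tends to $0$ as $R\to\infty$, which forces $\mathcal E_R\to0$. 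You avoid this two-step finish by importing the \emph{logarithmic} cutoff from Theorem~\ref{Thmain}; raising its increments to the power $\beta$ yields $\tilde{\mathcal D}_R=O\bigl((\log R)^{1-\beta}\bigr)$ on the dominant region when $n=\beta$ and $O(R^{(n-\beta)/2}(\log R)^{-\beta})$ when $n<\beta$, so $\tilde{\mathcal D}_R\to0$ outright and the dichotomy $\mathcal E_R\le C\tilde{\mathcal D}_R$ kills $\mathcal E_R$ directly (the a~priori finiteness of $\mathcal E_R$, which you justify, is what licenses dividing by $\mathcal E_R^{(\beta-1)/\beta}$). In short, both proofs are built on the same algebraic skeleton, but your cutoff choice converts a boundedness argument with an implicit second pass into a single vanishing-limit argument, and in that sense your version is tighter and makes the borderline case $n=\beta$ more transparent. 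One small point to make fully rigorous: when you ``rebuild the integrand of $\mathcal E_R$'' you implicitly use $\eta^{\beta-1}(x)+\eta^{\beta-1}(y)\le 2\bigl(\eta^{\beta}(x)+\eta^{\beta}(y)\bigr)^{(\beta-1)/\beta}$, which is correct (via $a^{p}+b^{p}\le 2\max(a,b)^{p}\le 2(a+b)^{p}$ for $0<p<1$) but deserves to be stated.
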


\begin{proof}
Suppose that  $f(u) \ge0$.  Let $\eta$ be a test function and  multiply  (\ref{main}) with $(u(x) - ||u||_{\infty}) \eta^{2m}(x)$ and integrate to get 
\begin{equation*}
\int_{\mathbb R^n}  \Phi'[u(x) - u(y) ] (u(x) - ||u||_{\infty}) \eta^{2m}(x) T_\Phi [u(x)]  dx \le 0, 
\end{equation*}
for $m:=\frac{\beta}{2} \ge 1$.  We now apply the technical Lemma \ref{lemtech} to conclude 
\begin{equation}\label{mathcal1}
0 \ge \iint_{\mathbb R^{2n}} \Phi'[u(x) - u(y) ] [ (u(x)  - || u||_{\infty}) \eta^{2m}(x) - (u(y)  - || u||_{\infty}) \eta^{2m}(y)  ] K(x-y) dx dy  . 
\end{equation}
Adding and subtracting $u(y) \eta^{2m}(x)$ and  $u(x) \eta^{2m}(y)$ to above and applying the fact that $\Phi'$ is an odd function,  we get 
\begin{eqnarray*}\label{}
&& \iint_{\mathbb R^{2n}} \Phi'[u(x) - u(y) ] [ u(x)  - u(y) ] [  \eta^{2m}(x)+\eta^{2m}(y)  ] K(x-y) dx dy
 \\&\le& 4 ||u||_{\infty} \iint_{\mathbb R^{2n}} |\Phi'[u(x) - u(y)] | |\eta^{2m}(x)-\eta^{2m}(y)| K(x-y) dx dy . 
\end{eqnarray*}
Note that $ |\eta^{2m}(x)-\eta^{2m}(y)   | \le 2m  |\eta(x)-\eta(y)| |\eta^{2m-1}(x)+\eta^{2m-1}(y)|   $. This and above implies that 
\begin{eqnarray*}\label{}
&& \iint_{\mathbb R^{2n}} \Phi'[u(x) - u(y) ] [ u(x)  - u(y) ] [  \eta^{2m}(x)+\eta^{2m}(y)  ] K(x-y) dx dy
 \\&\le&C \iint_{\mathbb R^{2n}} |\Phi'[u(x) - u(y)] |  |\eta(x)-\eta(y)| [\eta^{2m-1}(x)+\eta^{2m-1}(y)] K(x-y) dx dy, 
\end{eqnarray*}
where $C$ is a positive constant and it is independent from $R$. Consider the standard test function $\eta$ when $\eta=1$ in $\overline {B_R}$ and $\eta=0$ in $\overline{\mathbb R^n\setminus B_{2R}}$ with $\eta\in C_c^1(\mathbb R^n)$ and $||\nabla \eta||_{\infty}<C R^{-1}$ in $\overline{B_{2R}\setminus B_R}$.  We now apply the H\"{o}lder inequality with exponent to get 
\begin{eqnarray}\label{uxuy}
&& \iint_{\mathbb R^{2n}} \Phi'[u(x) - u(y) ] [ u(x)  - u(y) ] [  \eta^{2m}(x)+\eta^{2m}(y)  ] K(x-y) dx dy
 \\&\le&C  \left[\iint_{\Gamma_R } |\Phi'[u(x) - u(y)] |^{ \frac{2m}{2m-1} }  [\eta^{2m}(x)+\eta^{2m}(y)] K(x-y) dx dy \right]^{\frac{2m-1}{2m}}
 \\&& \left[\iint_{\Gamma_R }  |\eta(x)-\eta(y)|^{2m}  K(x-y) dx dy \right]^{\frac{1}{2m}} ,
\end{eqnarray}
when 
\begin{equation}\label{gamma2}
\Gamma_R=\cup_{i=1}^6\Gamma^i_R \ \ \ \text{and each } \Gamma^i_R \ \text{is given by} 
\end{equation}
\begin{eqnarray*}\label{}
&&  \Gamma^1_R:=B_{ R}\times (B_ {2R}\setminus B_{R}),  \Gamma^2_R:=(B_ {2R}\setminus B_{ R})\times (B_ {2R}\setminus B_{ R}), \Gamma^3_R:=(B_ {2R}\setminus B_{ R})\times (\mathbb R^n\setminus B_{2R}),\ 
\\&& \label{gamma3}   \ \Gamma^4_R:=B_{ R}\times (\mathbb R^n\setminus B_{2R}), \ \Gamma^5_R:=B_{R}\times B_{R},  \ \Gamma^6_R:=(\mathbb R^n\setminus B_{2R})\times (\mathbb R^n\setminus B_{2R}). 
\end{eqnarray*}
Note that from the assumptions we have  $ |\Phi'[u(x) - u(y)] | \le |u(x) - u(y)]|^{2m-1} $. Multiplying both side of this with $|\Phi'[u(x) - u(y)] |^{2m-1}$, we conclude  
\begin{equation*}
|\Phi'[u(x) - u(y)] |^{  2m  } \le |u(x) - u(y)]|^{2m-1}  |\Phi'[u(x) - u(y)] |^{2m-1} .
\end{equation*}
From the oddness assumption on $\Phi'$, we have $t\Phi(t)\ge 0$ for $t\in\mathbb R$. Therefore, 
\begin{equation*}
|\Phi'[u(x) - u(y)] |^{  \frac{2m}{2m-1}  } \le  \Phi'[u(x) - u(y)]  [u(x) - u(y)]. 
\end{equation*}
Substituting this in (\ref{uxuy}) we conclude 
\begin{eqnarray}\label{uxuy2}
&& \iint_{\mathbb R^{2n}} \Phi'[u(x) - u(y) ] [ u(x)  - u(y) ] [  \eta^{2m}(x)+\eta^{2m}(y)  ] K(x-y) dx dy
 \\&\le&C  \label{uxuy3}  \left[\iint_{\Gamma_R}\Phi'[u(x) - u(y) ] [ u(x)  - u(y) ] [\eta^{2m}(x)+\eta^{2m}(y)] K(x-y) dx dy \right]^{\frac{2m-1}{2m}}
 \\&& \label{uxuy4}  \left[\iint_{\Gamma_R} |\eta(x)-\eta(y)|^{2m}  K(x-y) dx dy \right]^{\frac{1}{2m}}. \end{eqnarray}
We now provide an upper bound for (\ref{uxuy4}). Note that when $(x,y)\in\Gamma^5_R\cup\Gamma^6_R$,  we have $|\eta(x)-\eta(y)|=0$. 
\begin{eqnarray*}
\iint_{\Gamma_R} |\eta(x)-\eta(y)|^{2m}  K(x-y) dx dy &=& \sum_{i=1}^4 \iint_{\Gamma^i_R \cap \{|x-y|\le R_*\} } |\eta(x)-\eta(y)|^{2m}  K(x-y) dx dy 
\\&&+ \sum_{i=1}^4  \iint_{\Gamma^i_R \cap \{|x-y| > R_*\} } |\eta(x)-\eta(y)|^{2m}  K(x-y) dx dy 
\\&=& \sum_{i=1}^4 I_i(R) + \sum_{i=1}^4 J_i(R).  
\end{eqnarray*}
For $(x,y)\in \Gamma^4_R$,   we have $|\eta(x)-\eta(y)|=1$ and for $(x,y)\in\cup_{i=1}^3 \Gamma^i_R$,  we conclude 
\begin{equation*}
|\eta(x)- \eta(y)|^{\beta} \le C R^{-\beta} |x-y|^{\beta}.
\end{equation*}
We now consider each domain $\Gamma^i_R$ for $1\le i\le 3$ and $\Gamma^4_R$ separately to provide upper bounds for (\ref{uxuy4}).  

\noindent {\bf Case 1}: Suppose that  $(x,y)\in \Gamma^1_R $. Then, for kernels satisfying (\ref{Jumpki}) we obtain 
\begin{eqnarray*}
I_1(R)&\le& CR^{-\beta}  \int_{B_R\setminus B_{R-R_*}} \int_{B_{R+R_*}\setminus B_R}  |x-y|^\beta K(x-y) dy dx 
\\&\le& C R^{-\beta} \int_{B_R\setminus B_{R-R_*}} dx  \int_{B_{R_*}}|z|^{\beta-n-\alpha}  dz  \le   \frac{C {R_*}^{\beta-\alpha}}{\beta-\alpha}  R^{n-1-\beta}   . 
\end{eqnarray*}
Now suppose that (\ref{Jumpkei})-(\ref{Jumpkeri}) hold with $D(r)< C r^{-\theta}$ when $\theta>\beta+1$. Then,
 \begin{equation*}
J_1(R) \le  CR^{-\beta} \left[ \int_{B_R} dx \right]  \left[\sum_{k=1}^\infty \int_{k R_*<|z|<2k R_*} |z|^\beta K(z) dz \right]
\le \frac{C R_*^{\beta-\theta}}{R^\beta}  \left[ \int_{B_{R}} dx \right] \left[\sum_{k=1}^\infty k^{\beta-\theta}\right] \le  \frac{C R_*^{\beta-\theta } }{R^{\beta-n}} , 
\end{equation*} 
where we have used $\theta>\beta+1$ and $\beta-\alpha>0$.

\noindent {\bf Case 2}: Suppose that  $(x,y)\in \Gamma^2_R $. Then, whenever (\ref{Jumpki}) holds we have  
\begin{eqnarray}
I_2(R)&\le&  CR^{-\beta} \left[\int_{B_ {2R}\setminus B_{ R}} dx \right] \left[ \int_{B_{R}} |z|^{\beta-n-\alpha} dz \right]
\\ &\le&   \frac{C {R_*}^{\beta-\alpha}}{\beta-\alpha}  R^{n-\beta}   . 
\end{eqnarray}
For kernels satisfying (\ref{Jumpkei})-(\ref{Jumpkeri}),  the above estimate holds for $I_2(R)$  and 
\begin{equation}\label{J2R}
J_2(R) \le  CR^{-\beta}  \left[  \int_{B_ {2R}\setminus B_{ R}} dx \right] \left[  \sum_{k=1}^\infty \int_{k R_*<|z|<2k R_*} |z|^\beta K(z) dz  \right]\le  \frac{C R_*^{\beta-\theta } }{R^{\beta-n}} , 
\end{equation} 
where we have used $\theta>\beta+1$ and $\beta-\alpha>0$.  

\noindent {\bf Case 3}: Suppose that  $(x,y)\in \Gamma^3_R $. Just like the previous cases we first assume that  (\ref{Jumpki}) holds. Then, 
\begin{eqnarray*}
I_3(R)&\le&  C R^{-\beta} \int_{B_{2R}\setminus B_{2R-R_*}} \int_{B_{2R+R_*}\setminus B_{2R}}  |x-y|^\beta K(x-y) dy dx 
\\&\le& C R^{-\beta} \int_{B_{2R}\setminus B_{2R-R_*}} dx \int_{B_{R_*}}| z|^{\beta-n-\alpha}  dz 
\le   \frac{C R_*^{\beta-\alpha}}{\beta-\alpha}  R^{n-1-\beta}   .  
\end{eqnarray*}
When the kernel satisfies (\ref{Jumpkei})-(\ref{Jumpkeri}), then an upper bound of the  form (\ref{J2R})  holds for $J_3(R)$.

\noindent {\bf Case 4}: Suppose that  $(x,y)\in \Gamma^4_R $.  Note that $I_4(R)=J_4(R)=0$ whenever (\ref{Jumpki}) holds for large enough $R$.   We now assume that  (\ref{Jumpkei})-(\ref{Jumpkeri}) holds and we provide an estimate for $J_4(R)$. Note that $\eta(x)=1$ and $\eta(y)=0$ and  $|x-y|>R>R_*$, 
\begin{equation}\label{J4R}
J_4(R) = \left[ \int_{ B_{ R}}  dx \right] \left[  \sum_{k=1}^\infty \int_{k R<|z|<2kR} K(z) dz\right]  \le C R^{n-\theta}   \sum_{k=1}^\infty k^{-\theta} 
 \le  C  R^{n-\theta}  . 
\end{equation} 
From the assumption $n\le \beta$ and from the estimate (\ref{uxuy2}), we conclude 
 \begin{equation*}\label{}
\iint_{\Gamma_R} \Phi'[u(x) - u(y) ] [ u(x)  - u(y) ] [  \eta^{2m}(x)+\eta^{2m}(y)  ] K(x-y) dx dy\le C , 
\end{equation*}
where $C $ is a positive constant that is independent from $R$. From this and (\ref{uxuy2}),  we get 
\begin{equation*}\label{}
\iint_{\mathbb R^{2n}}   \Phi'[u(x) - u(y) ] [ u(x)  - u(y) ] K(x-y) dx dy =0.  
\end{equation*}
This implies that  $  \Phi'[u(x) - u(y) ] [ (u(x)  - (u(y) ] K(x-y) =0$ a.e. $(x,y)\in\mathbb R^n\times\mathbb R^{n}$.  From the assumptions, we have $\Phi'$ is an odd function and $\Phi'>0$ in $R^+$. This implies that  $ \Phi'[u(x) - u(y) ] [ (u(x)  - (u(y) ] \ge 0$ and equality occurs if and only if $u(x)=u(y)$ for $x\in\mathbb R^2$ and $y\in B_{r_*(x)}$. This implies that  $u$ is constant. Note that the case of $u f(u)\le 0$ is  similar and we omit the proof.
\end{proof}

\section{Liouville Theorem: Second proof of Theorem \ref{Thmain}}\label{SecLio}

We now provide a Liouville theorem for the quotient $\sigma:=\frac{\psi}{\phi}$ when $\psi:=\nabla u\cdot \nu$ for $\nu(x)=\nu(x',0):\RR^{n-1}\to \RR $ and $\phi$ solves the linearized system (\ref{L}). 
Note that for stable solutions  $u$ of (\ref{main}),   there exists a function $\phi$  such that 
\begin{equation}\label{phi}
L_\Phi[\phi(x)] = f'(u) \phi(x) . 
\end{equation}
  Differentiating (\ref{mainS}) with respect to $x$,  we get 
\begin{equation}\label{psisum}
L_\Phi[\psi(x)] = f'(u) \psi(x). 
\end{equation}
 From (\ref{psisum}) and the fact that $\psi =\sigma \phi$,  we have 
\begin{equation}\label{sigmaphisum}
L_\Phi[ \sigma (x) \phi (x)] = f'(u) \sigma(x) \phi(x). 
\end{equation}
Multiply (\ref{phi}) with $-\sigma $ and add with (\ref{sigmaphisum}) to get 
\begin{equation}\label{LLL}
L_\Phi[  \sigma(x) \phi(x) ]- \sigma(x) L_\Phi [\phi(x)]= 0 . 
\end{equation}
Note that for any two functions $g,h\in C^1(\mathbb R^n)$, the following technical identity holds  
\begin{eqnarray*}\label{idenLL}
L_{\Phi} [g(x)h(x)] &=& g(x) L_{\Phi}[h(x)] + h(x) L_{\Phi}[g(x)]\\&& \nonumber -   \int_{\mathbb R^n} \Phi''[u(x) - u(y)]    \left  [g(x) - g(y)  \right]  \left[h(x)-h(y)  \right] K(x-y) dy. 
\end{eqnarray*}
Combining (\ref{LLL}) and (\ref{idenLL}) for $h=\phi$ and $g=\sigma$, we conclude  
\begin{equation}\label{Lphisigma}
\phi (x) L_\Phi[\sigma(x)] - \int_{\RR^n} \Phi''[ u(x) - u(y)]  [\sigma(x)-\sigma(y)] [\phi(x)-\phi(y)] K(x-y) dy = 0. 
\end{equation}  
This implies that 
\begin{equation}\label{linPhi}
\int_{\RR^n} \Phi''\left[ u(x) - u(y) \right] \left( \sigma(x)- \sigma(y) \right) \phi(y)  K(x-y) dy  = 0 . 
\end{equation}

\begin{thm}\label{thmlione}
Suppose that $\sigma$ and $\phi$ satisfy (\ref{linPhi}) and $\phi$ does not change sign.   Assume also that 
\begin{equation}\label{Phiuxuy}
   \iint_{ \{\cup_{k=1}^4 \Gamma^k_R \}} \Phi''\left[ u(x) - u(y) \right]   [\sigma(x) +  \sigma(y)]^2  \phi(x) \phi(y) |x-y|^2 K(x-y)   dy dx \le C R^2 , 
 \end{equation}
where $\Gamma^k_R$ are given in (\ref{gamma2}). Then,  $\sigma$ must be constant. 
\end{thm}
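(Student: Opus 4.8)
The plan is to transplant the classical linear Liouville argument for the quotient $\sigma=\psi/\phi$ (\cite{bcn,ac}) to the present weighted nonlocal framework: the hypothesis (\ref{Phiuxuy}) plays the role of the quadratic energy growth, and the logarithmic cut-off used in the proof of Theorem \ref{Thmain} plays the role of the gradient cut-off. Throughout write $\Psi(x,y):=\Phi''[u(x)-u(y)]\,K(x-y)$, which is nonnegative and, since $\Phi''$ and $K$ are even, symmetric in $x$ and $y$; as usual we may assume $u$ is nonconstant. First I would test (\ref{linPhi}): for $\zeta\in C_c^1(\RR^n)$, multiply (\ref{linPhi}) by $\sigma(x)\phi(x)\zeta^2(x)$, integrate in $x$, and symmetrize (average with the integral obtained after the relabelling $x\leftrightarrow y$, which is licit since $\Psi$ is symmetric). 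A short computation then gives
\[
\iint_{\RR^{2n}}\Psi(x,y)\,\phi(x)\phi(y)\,\big(\sigma(x)-\sigma(y)\big)\big(\sigma(x)\zeta^2(x)-\sigma(y)\zeta^2(y)\big)\,dy\,dx=0 .
\]
(Compact support of $\zeta$ and local boundedness of $\phi,\sigma$ make these integrals absolutely convergent off the diagonal, the diagonal being handled by the cancellation of the difference quotient, as in Lemma \ref{lemtech}.)

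Next I would use the elementary identity, valid for all reals,
\[
(a-b)(as-bt)=\tfrac12(a-b)^2(s+t)+\tfrac12(a^2-b^2)(s-t),
\]
with $a=\sigma(x)$, $b=\sigma(y)$, $s=\zeta^2(x)$, $t=\zeta^2(y)$, and $s-t=(\zeta(x)-\zeta(y))(\zeta(x)+\zeta(y))$. Inserting this into the previous identity turns it into
\begin{align*}
E_\zeta&:=\iint_{\RR^{2n}}\Psi\,\phi(x)\phi(y)\,\big(\sigma(x)-\sigma(y)\big)^2\big(\zeta^2(x)+\zeta^2(y)\big)\,dy\,dx \\
&=-\iint_{\RR^{2n}}\Psi\,\phi(x)\phi(y)\,\big(\sigma^2(x)-\sigma^2(y)\big)\big(\zeta(x)-\zeta(y)\big)\big(\zeta(x)+\zeta(y)\big)\,dy\,dx .
\end{align*}
Writing $\sigma^2(x)-\sigma^2(y)=(\sigma(x)-\sigma(y))(\sigma(x)+\sigma(y))$, applying the Cauchy--Schwarz inequality to the right-hand side, and using $(\zeta(x)+\zeta(y))^2\le2(\zeta^2(x)+\zeta^2(y))$, one bounds it by $\sqrt2\,E_\zeta^{1/2}B_\zeta^{1/2}$, where
\[
B_\zeta:=\iint_{\RR^{2n}}\Psi\,\phi(x)\phi(y)\,\big(\sigma(x)+\sigma(y)\big)^2\big(\zeta(x)-\zeta(y)\big)^2\,dy\,dx .
\]
Hence $E_\zeta\le2B_\zeta$ for every $\zeta\in C_c^1(\RR^n)$.

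Now I would take $\zeta=\eta_R$, the logarithmic cut-off from the proof of Theorem \ref{Thmain} ($\eta_R\equiv\tfrac12$ on $B_{\sqrt R}$, $\eta_R\equiv0$ off $B_R$, and $|\nabla\eta_R|\le C\,(|x|\log R)^{-1}$ on the annulus), so that $(\eta_R(x)-\eta_R(y))^2$ is supported in $\bigcup_{i=1}^4\Omega^i_R$. On these sets the four case estimates of that proof give, for the short range $|x-y|\le R_*$, a bound $(\eta_R(x)-\eta_R(y))^2\le C(\log^2R)^{-1}\rho^{-2}|x-y|^2$ on the dyadic shell $|x|\sim\rho$, while the long range $|x-y|>R_*$ is controlled exactly as there via the decay (\ref{Jumpkeri}). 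Substituting into $B_{\eta_R}$, splitting $\bigcup_i\Omega^i_R$ into dyadic shells $\rho=2^j$ with $\sqrt R\lesssim2^j\lesssim R$, and applying (\ref{Phiuxuy}) with $R$ replaced by $2^j$ — which bounds $\iint_{\cup_k\Gamma^k_{2^j}}\Psi\,\phi(x)\phi(y)(\sigma(x)+\sigma(y))^2|x-y|^2\,dy\,dx$ by $C(2^j)^2$ — one obtains
\[
B_{\eta_R}\ \le\ \frac{C}{\log^2R}\sum_{\sqrt R\,\lesssim\,2^j\,\lesssim\,R}\frac{C(2^j)^2}{(2^j)^2}\ \le\ \frac{C}{\log R}\ \longrightarrow\ 0\qquad(R\to\infty).
\]

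Combining this with $E_{\eta_R}\le2B_{\eta_R}$ and letting $R\to\infty$ (so that $\eta_R^2(x)+\eta_R^2(y)\to\tfrac12$ pointwise), Fatou's lemma forces $\iint_{\RR^{2n}}\Psi\,\phi(x)\phi(y)(\sigma(x)-\sigma(y))^2\,dy\,dx=0$. Since $u$ is nonconstant and $\Phi''>0$ on $\RR^+$, and $\phi$ keeps a fixed sign, the integrand is strictly positive for a.e.\ pair $(x,y)$ with $K(x-y)>0$; as $K$ is positive in a punctured neighbourhood of the origin by (\ref{Jumpki}) or (\ref{Jumpkei}), this gives $\sigma(x)=\sigma(y)$ for a.e.\ $(x,y)$ with $|x-y|$ small, so $\sigma$ is constant by a standard connectedness argument. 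The main obstacle is the boundary-term estimate in the preceding paragraph: extracting the $(\log R)^{-1}$ gain demands the precise pointwise decay of $(\eta_R(x)-\eta_R(y))^2$ on each $\Omega^i_R$, the matching of the dyadic decomposition with the scales appearing in (\ref{Phiuxuy}), and the bookkeeping of the long-range (finite-range or decaying) interactions — essentially a re-run of the Case~1--4 analysis of Theorem \ref{Thmain} while carrying the extra weight $\phi(x)\phi(y)(\sigma(x)+\sigma(y))^2$. The remaining pieces — the testing, the algebraic identity, and the Cauchy--Schwarz — are routine.
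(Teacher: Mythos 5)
Your opening moves coincide with the paper's: you multiply \eqref{linPhi} by $\sigma(x)\phi(x)\zeta^2(x)$, integrate, symmetrize in $x\leftrightarrow y$ using evenness of $\Phi''$ and $K$, apply the algebraic identity behind \eqref{etaiden} to split the bilinear form into the coercive term $E_\zeta$ and the error term, and close with Cauchy--Schwarz to get $E_\zeta\lesssim B_\zeta$. Up to that point you and the paper agree.

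Where you diverge is the choice of cut-off and the endgame, and this is where a genuine gap appears. The paper takes the \emph{standard linear} cut-off ($\eta\equiv1$ on $B_R$, $\eta\equiv0$ off $B_{2R}$, $\|\nabla\eta\|_\infty\lesssim R^{-1}$), for which $(\eta(x)-\eta(y))^2\le CR^{-2}|x-y|^2$ holds \emph{uniformly} on all of $\cup_{k=1}^4\Gamma^k_R$ (including the far-field piece $\Gamma^4_R$, because there $|x-y|\gtrsim R$). Then \eqref{Phiuxuy} gives only the \emph{bounded} estimate $J(R)\le C$ --- not $J(R)\to0$. The conclusion is reached by absorption: from $I^2\le C\,I(R)J(R)$ and $I(R)\le I$ one first gets $I\le C$, hence $I_\infty<\infty$; since $\cup_{k=1}^4\Gamma^k_R$ shrinks to the empty set as $R\to\infty$, the tail $I(R)\to0$, and the same inequality then forces $I=0$. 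No decay of $J(R)$ is ever needed.

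Your plan instead is to show $B_{\eta_R}\to0$ directly with the logarithmic cut-off and a dyadic sum. This works for the ``comparable-scale'' pieces (essentially $\Omega^1_R,\Omega^2_R,\Omega^3_R$ with $|x-y|\lesssim R_*$), where $(\eta_R(x)-\eta_R(y))^2\lesssim(\log R)^{-2}\rho^{-2}|x-y|^2$ on the shell $|x|\sim\rho$, and the dyadic sum against \eqref{Phiuxuy} yields the $1/\log R$ gain. But the piece $\Omega^4_R=B_{\sqrt R}\times(\RR^n\setminus B_R)$ (and more generally the genuinely long-range interactions) is not controlled by \eqref{Phiuxuy} alone: there $(\eta_R(x)-\eta_R(y))^2=\tfrac14$ and $|x-y|\gtrsim R$, so \eqref{Phiuxuy} applied at scale $\sim R$ gives only $O(1)$, not $o(1)$. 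You bridge this by invoking the kernel decay \eqref{Jumpkeri} (``the long range $|x-y|>R_*$ is controlled exactly as there''), but \eqref{Jumpkeri} is \emph{not} a hypothesis of Theorem~\ref{thmlione}; the theorem is an abstract Liouville statement whose only quantitative input is \eqref{Phiuxuy}. So as written, your proof smuggles in an extra assumption, and without it the claimed bound $B_{\eta_R}\lesssim 1/\log R$ does not follow. The fix is to drop the attempt to make $B_{\eta_R}$ vanish and instead use the paper's absorption-plus-tail argument, which only requires $B_{\eta_R}=J(R)\le C$.
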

 \begin{proof} Multiplying both sides of (\ref{linPhi}) with $\eta^2(x) \sigma(x) \phi(x)$ and integrating, we get  
 \begin{equation*}\label{}
\iint_{\RR^{2n}} \Phi''\left[ u(x) - u(y) \right] \left( \sigma(x)- \sigma(y) \right) \phi(x) \phi(y)  K(x-y) \eta^2 (x) dx dy  = 0 , 
\end{equation*}
for a test function $\eta\in C_c^1(\mathbb R^n).$  Rearranging terms and apply the fact that $\Phi''$ is an even function, we get 
    \begin{equation}\label{etasigmale0}
  \iint_{\RR^{2n}}  \Phi''\left[ u(x) - u(y) \right] [\eta^2(x) \sigma(x) -  \eta^2(y) \sigma(y)]    [\sigma(x)- \sigma(y)] \phi(x) \phi(y)  K(x-y)   dy dx = 0   . 
  \end{equation}
 Note that 
\begin{equation}\label{etaiden}
 [\eta^2(x) \sigma(x) -  \eta^2(y) \sigma(y)] = \frac{1}{2}  [\sigma(x) -  \sigma(y)][\eta^2(x) + \eta^2(y)] 
 + \frac{1}{2}  [\sigma(x) +  \sigma(y)][\eta^2(x) - \eta^2(y)] .
\end{equation}
Combining (\ref{etasigmale0}) and (\ref{etaiden}), we get 
\begin{eqnarray*}
\ \ \ \ 0\le I &:=&    \iint_{\RR^{2n}}   \Phi''\left[ u(x) - u(y) \right] [\sigma(x) -  \sigma(y)]^2 [\eta^2(x) + \eta^2(y)] \phi(x) \phi(y)  K(x-y)   dy dx 
\\& = &    \iint_{\RR^{2n}} \Phi''\left[ u(x) - u(y) \right]  [\sigma^2(x) -  \sigma^2(y)] [\eta^2(x) - \eta^2(y)]  \phi(x) \phi(y)  K(x-y)   dy dx  
\\&\le& C \left( \iint_{\RR^{2n}}   \Phi''\left[ u(x) - u(y) \right] [\sigma(x) -  \sigma(y)]^2 [\eta^2(x) + \eta^2(y)] \phi(x) \phi(y)  K(x-y)   dy dx  \right)^{1/2}
\\& & \left( \iint_{\RR^{2n}}    \Phi''\left[ u(x) - u(y) \right] [\sigma(x) +  \sigma(y)]^2 [\eta(x) - \eta(y)]^2 \phi(x) \phi(y)  K(x-y)   dy dx \right)^{1/2} 
\end{eqnarray*}
Note that in the above we have used the Cauchy-Schwarz inequality and $[\eta(x) + \eta(y)]^2 \le 2 [\eta^2(x) + \eta^2(y)]$ and 
\begin{equation*}\label{etaiden1}
 [\sigma^2(y) -  \sigma^2(x)] [\eta^2(x) - \eta^2(y)] = [\sigma(y) -  \sigma(x)] [\sigma(y) +  \sigma(x)][\eta(x) - \eta(y)] [\eta(x) + \eta(y)]   . 
 \end{equation*}
We now set to be the standard test function that is $\eta=1$ in $\overline {B_R}$ and $\eta=0$ in $\overline{\RR^n\setminus B_{2R}}$ with $||\nabla \eta||_{L^{\infty}(B_{2R}\setminus B_R)}\le C R^{-1}$.  Therefore, 
\begin{eqnarray*}
\ \ \ I^2&\le & C \left(  \iint_{  \{\cup_{k=1}^4\Gamma^k_R \} \ }   \Phi''\left[ u(x) - u(y) \right] [\sigma(x) -  \sigma(y)]^2 [\eta^2(x) + \eta^2(y)] \phi(x) \phi(y)  K(x-y)   dy dx  \right)
\\& &  \left(     \iint_{ \{\cup_{k=1}^4\Gamma^k_R\}  }  \Phi''\left[ u(x) - u(y) \right]  [\sigma(x) +  \sigma(y)]^2 [\eta(x) - \eta(y)]^2 \phi(x) \phi(y)  K(x-y)   dy dx \right)  
\\& =:& I(R) J(R) ,
\end{eqnarray*}
where domain decompositions $\Gamma^k_R$  are set in (\ref{gamma2}).   From the definition of $\eta$,  for $(x,y)$ in $\{\cup_{k=1}^4\Gamma^k_R \} $    we have 
  \begin{equation*}
 (\eta(x)-\eta(y))^2 \le C R^{-2} |x-y|^2  .    
   \end{equation*}
Note that $I(R)\le I$ and from the assumptions we have 
 \begin{equation*}
J(R) \le R^{-2}  \iint_{\cup_{k=1}^4 \Gamma^k_R} \Phi''\left[ u(x) - u(y) \right]  [\sigma(x) +  \sigma(y)]^2  \phi(x) \phi(y) |x-y|^2 K(x-y)   dy dx \le C. 
   \end{equation*}
This implies that $0\le I\le C$ and then $I(R)\le C$. Therefore, $I=0$.  This completes the proof. 
    \end{proof}

 Note that the above Liouville theorem can be applied to establish one-dimensional symmetry results for higher dimensions that is $n\ge 2$. One can simplify the assumption (\ref{Phiuxuy}) as what follows.  Since $|\nabla u|$ is globally bounded,  we conclude that $|\sigma|\le \frac{C}{\phi}$. This implies that 
\begin{equation*}
 [\sigma(x) +  \sigma(y)]^2 \le C \left(\frac{1}{\phi^2(x)} + \frac{1}{\phi^2(y)} \right). 
\end{equation*}
Therefore, 
\begin{equation*}
 [\sigma(x) +  \sigma(y)]^2  \phi(x) \phi(y)  \le C  \left( \frac{\phi(x)}{\phi(y)} + \frac{\phi(y)}{\phi(x)}  \right). 
\end{equation*}
Suppose now that  the following Harnack inequality holds for $\phi$ 
 \begin{equation*}\label{harn}
 \sup_{B_1(x_0)} \phi  \le C \inf_{B_1(x_0)} \phi, \ \ \text{for all} \ \ x_0\in\mathbb R^n. 
 \end{equation*}
  This implies that 
\begin{equation*}
 [\sigma(x) +  \sigma(y)]^2  \phi(x) \phi(y)  \le C. 
\end{equation*}
From this, the assumption (\ref{Phiuxuy}) can be simplified as  
\begin{equation}\label{Phiuxuysim}
   \iint_{ \{\cup_{k=1}^4 \Gamma^k_R \}} \Phi''\left[ u(x) - u(y) \right]  |x-y|^2 K(x-y)   dy dx \le C R^2.  
 \end{equation}
Let $u$  be a bounded monotone solution of   (\ref{main}) in two dimensions when the  kernel $K$ satisfies either (\ref{Jumpki}) or  (\ref{Jumpkei})-(\ref{Jumpkeri}) with $D(r)< C r^{-\theta}$ when $\theta>\beta+1$.  Applying similar arguments as in the proof of Theorem \ref{thmliouville}, one can conclude that (\ref{Phiuxuysim}) holds in two dimensions.  Therefore,   $u$ must be a one-dimensional function.  

We end this section with mentioning that  bounded global minimizers of nonlocal energy is studied in \cite{bucur}. The author has provided one-dimensional symmetry results for global energy minimizers of certain nonlocal operators in two dimensions,  under various assumptions on the operator. The ideas and methods applied in this article are different from ours, however, there are some connections in the spirit.

\section{Energy estimates for layer solutions}\label{SecEne}

Let us start this section with the notion of layer solutions. 
\begin{dfn}\label{layer}
We say that $u$ is a layer solution of (\ref{main}) if $ u$ is a bounded monotone solution of (\ref{main}) such that 
 \begin{equation}\label{asympsys}
\lim_{x_n\to \pm\infty}  u(x',x_{n})= \pm 1
 \ \ \text{for} \ \ \ x'\in\mathbb R^{n-1}. 
\end{equation}  
\end{dfn}

We refer interested readers to \cite{csol,cc,cabSire,cp,sv,savin} and references therein in regards to layer solutions. Note that assumption (\ref{asympsys}) is known as a natural assumption in this context and Savin's proof of De Giorgi's conjecture in dimensions $4\le n\le 8$ and the counterexample of del Pino-Kowalczyk-Wei in dimensions $n\ge 9$ rely on (\ref{asympsys}). The following theorem deals with energy estimates for  layer solutions of (\ref{main}) when the kernel is either with finite range or decay at infinity. Note that the energy estimate holds for a large class of kernels $K$ and nonlinearities $\Phi$.

\begin{thm}\label{thmlayerK1}
Suppose that $ u$ is a bounded monotone layer solution of (\ref{main}) when  $ F(1)=0$ and (\ref{Phidtb}) hold.   Assume also that the kernel $K$ satisfies either (\ref{Jumpki}) or  (\ref{Jumpkei})-(\ref{Jumpkeri}) with $D(r)< C r^{-\theta}$ when $\theta>\beta$. Then,     
\begin{equation}\label{EKRnminus1}
\mathcal E^\Phi_K(u,B_R) \le C  R^{n-1} \ \ \text{for} \ \  R>R_*, 
\end{equation}
where the positive constant $C$ is independent from $R$ but may depend on $R_*,\alpha,\beta$. 
\end{thm}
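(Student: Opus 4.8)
The plan is to bound separately the two pieces of $\mathcal E^\Phi_K(u,B_R)$: the kinetic term $\mathcal K^\Phi_K(u,B_R)$ and the potential term $-\int_{B_R} F(u)\,dx$. For the potential term, I would first argue that the translated function $F(u)$ is nonnegative (after normalizing $F$ so that $F(1)=F(-1)=0$ and $F\ge 0$, which is the standard normalization for layer solutions) so that $-\int_{B_R}F(u)\,dx\le 0$; more precisely, since $u$ is a monotone layer solution with $u\to\pm 1$ as $x_n\to\pm\infty$, the quantity $\int_{B_R}F(u)\,dx$ can be controlled by $CR^{n-1}$ by slicing in the $x_n$ variable: on each vertical line the integrand $F(u(x',x_n))$ is integrable in $x_n$ with a bound uniform in $x'$ (because $u$ converges exponentially-fast-or-at-least-integrably along the line, a fact one extracts from the equation and boundedness of $u$), so integrating over the $(n-1)$-dimensional slice $B_R'$ gives the $R^{n-1}$ scaling. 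In fact, for the upper bound on the energy the sign $-\int F(u)\le 0$ is already in our favor once $F\ge 0$, so the real work is the kinetic term.

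For the kinetic term, the main idea is the classical comparison-with-a-competitor argument: since $u$ is a critical point (solution), and in fact a minimizer among compactly supported perturbations for layer solutions (or, if only assuming it is a solution, one compares the energy of $u$ on $B_R$ with that of a suitable competitor $\bar u$ that agrees with $u$ outside $B_{R}$ and equals $+1$ or $-1$ on most of $B_{R-1}$). Concretely, I would take the competitor $\bar u(x)=\max\{u(x),\,\eta_R(x)\}$ or a one-dimensional profile matched to $u$ near $\partial B_R$, estimate $\mathcal K^\Phi_K(\bar u,B_R)\le CR^{n-1}$ directly using $\Phi(t)\le C|t|^\beta$ (assumption (4.?), i.e.\ $\Phi(t)\le Ct^\beta$ for $t\in\mathbb R^+$) together with $|u(x)-u(y)|\le C|x-y|$ (Lipschitz bound from boundedness of $u$ and regularity) and the kernel bounds (1.??)–(1.??); the long-range part is where the decay hypothesis $D(r)<Cr^{-\theta}$ with $\theta>\beta$ is used, since $\int_{|z|>R_*}|z|^\beta K(z)\,dz\lesssim \sum_k (kR_*)^\beta D(kR_*)\lesssim \sum_k k^{\beta-\theta}<\infty$, giving a finite contribution per unit volume and hence $CR^{n}$ — wait, one must be careful: the naive bound gives $R^n$, not $R^{n-1}$, so the gain to $R^{n-1}$ must come from the layer structure, namely that $u(x)-u(y)$ is small (not just bounded by $|x-y|$) when both $x,y$ have large $|x_n|$ of the same sign. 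This is the crux.

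Thus the key technical step — and the one I expect to be the main obstacle — is proving that $\mathcal K^\Phi_K(u,B_R)\le CR^{n-1}$ rather than the trivial $CR^n$. I would handle this by decomposing $B_R$ into the slab $\{|x_n|\le M\}$ and the two half-regions $\{x_n>M\}$, $\{x_n<-M\}$. On the slab the volume is $CMR^{n-1}$, so any bounded-density estimate there costs only $R^{n-1}$. On the far regions, one uses that $F(u)\to 0$ and, crucially, a pointwise gradient decay $|\nabla u(x)|\to 0$ as $|x_n|\to\infty$ (obtained from the uniform $C^{1,\gamma}$ estimates for the nonlocal equation applied on unit balls centered far out, combined with $\mathrm{osc}_{B_1(x)}u\to 0$), which yields $|u(x)-u(y)|\le \omega(|x_n|)|x-y|$ with $\omega(t)\to 0$; feeding this into $\Phi(u(x)-u(y))\le C|u(x)-u(y)|^\beta$ makes the kinetic density in the far region integrable in $x_n$, so integrating over the $(n-1)$-dimensional cross section gives $CR^{n-1}$. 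The interaction terms between $B_R$ and $\mathbb R^n\setminus B_R$ in (1.?) are treated the same way, splitting $\mathbb R^n\setminus B_R$ into the near annulus $B_{2R}\setminus B_R$ (volume $\sim R^n$ but integrand small by the Lipschitz+decay bound, or handled via the $|x-y|\le R_*$ truncation so only a boundary layer of width $R_*$ around $\partial B_R$ contributes, giving $R^{n-1}$) and the far part $\mathbb R^n\setminus B_{2R}$ (controlled by the decay $D(r)<Cr^{-\theta}$, $\theta>\beta$, together with $|u|\le 1$). Assembling these pieces with the sign of the potential term completes the bound $\mathcal E^\Phi_K(u,B_R)\le CR^{n-1}$.
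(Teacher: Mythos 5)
Your proposal takes a genuinely different route from the paper, and as written it has a real gap. The paper does not try to bound the kinetic term $\mathcal K^\Phi_K(u,B_R)$ directly, nor does it invoke minimality of $u$ or gradient decay in $x_n$. Instead it uses the sliding method: set $u^t(x)=u(x',x_n+t)$, differentiate $\mathcal E^\Phi_K(u^t,B_R)$ in $t$, observe that because $u^t$ still solves (\ref{main}) (translation invariance) the bulk terms cancel against the Euler--Lagrange relation, leaving only the boundary interaction
\begin{equation*}
\partial_t\mathcal E^\Phi_K(u^t,B_R)=\int_{\mathbb R^n\setminus B_R}\int_{B_R}\Phi'[u^t(x)-u^t(y)]\,\partial_t u^t(x)\,K(x-y)\,dy\,dx,
\end{equation*}
and then integrate over $t\in(0,\infty)$ using monotonicity ($\partial_t u^t>0$), $u^t\to 1$, and $\mathcal E^\Phi_K(1,B_R)=0$ (this is exactly where $F(1)=0$ is used --- note that $F(-1)=0$ or $F\ge 0$ are never needed). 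After integrating in $t$, the monotone factor $\partial_t u^t$ disappears via $\int_0^\infty\partial_t u^t\,dt=1-u\le 2$, and the remaining boundary double integral is decomposed over $\Pi^1_R,\Pi^2_R,\Pi^3_R$; for finite-range kernels only the annular piece $\Pi^3_R$ of width $2R_*$ contributes, which yields $R^{n-1}$, and for kernels with decay $D(r)<Cr^{-\theta}$, $\theta>\beta$, the tail sums $\sum_k k^{\beta-1-\theta}$ converge and again yield $R^{n-1}$.

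The gap in your argument is precisely the step you flagged yourself as the crux. Your route requires either (a) that $u$ is a local minimizer so you can compare to a competitor, or (b) a quantitative pointwise decay $|\nabla u(x)|\le\omega(|x_n|)$ with $\omega$ integrable. Neither is established: the paper never proves or uses minimality, and uniform $C^{1,\gamma}$ estimates for the nonlinear nonlocal operator $T_\Phi$ with general $\Phi$ and general kernels (including the truncated and decaying ones of (\ref{Jumpki}), (\ref{Jumpkei})--(\ref{Jumpkeri})) are far from automatic and would constitute a separate and substantial piece of regularity theory. The sliding identity circumvents both obstacles at once: the cancellation against the equation removes the bulk $R^n$ contribution without needing to know that the kinetic density decays in $x_n$, and the monotone $t$-integration replaces the competitor. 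If you want a route in the spirit of yours, the standard fix is to first \emph{derive} local minimality for monotone solutions (again via sliding) and then run the competitor estimate; but at that point you have already done the sliding computation, so the paper's direct version is shorter.
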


\begin{proof}
Set the shift function $u^t( x):=u( x',x_n+t)$ for $( x',x_n)\in\mathbb R^{n}$ and $t\in\mathbb R$.   The energy functional for the shift function $u^t$ is 
\begin{eqnarray*}
\mathcal E^\Phi_K(u^t,B_R) &=& \mathcal K^\Phi_K(u^t,B_R) - \int_{B_R}  F(u^t) dx  
\\&=&  \frac{1}{2} \int_{B_R} \int_{B_R}   \Phi[  u^t(x) -u^t(y) ] K(x-y) dy dx \\&&+  \int_{B_R} \int_{\mathbb{R}^n\setminus B_R}   \Phi[  u^t(x) -u^t(y) ]  K(x-y) dy dx - \int_{B_R}  F(u^t) dx, 
\end{eqnarray*}
where $R>R_*$.  We now differentiate the energy functional  in terms of parameter $t$ to get
\begin{eqnarray*}
\partial_t\mathcal E^\Phi_K(u^t,B_R) &=& \partial_t \mathcal K^\Phi_K(u^t,B_R) - \int_{B_R}  f' (u^t) \partial_t u^t dx  
\\&=& \frac{1}{2} \int_{B_R} \int_{B_R}   \Phi'[  u^t(x) -u^t(y) ]  [  \partial_t u^t(x) -\partial_t u^t(y) ]  K(x-y) dy dx \\&&+   \int_{B_R} \int_{\mathbb{R}^n\setminus B_R}  \Phi'[  u^t(x) -u^t(y) ]  [ \partial_t u^t(x) 
- \partial_t u^t(y) ] K(x-y) dy dx \\&&  - \int_{B_R}  f' (u^t) \partial_t u^t dx   . 
\end{eqnarray*}
Straightforward computations show that 
\begin{eqnarray} \nonumber
\partial_t\mathcal E^\Phi_K(u^t,B_R) &=&  
    \int_{\mathbb{R}^n\setminus B_R} \int_{B_R}  \Phi'[  u^t(x) -u^t(y) ]   \partial_t u^t(x)  K(x-y) dy dx  \\&& \label{tutT}+ \int_{B_R} \partial_tu^t(x) T_\Phi( u^t(x)) dx  - \int_{B_R}  f' (u^t) \partial_t u^t dx   .
\end{eqnarray}
It is straightforward to notice that $u^t$ is a solution of (\ref{main}).   Therefore, (\ref{tutT}) vanishes and consequently 
\begin{equation}
\partial_t \mathcal E^\Phi_K(u^t,B_R) =   \int_{\mathbb{R}^n\setminus B_R} \int_{B_R}  \Phi'[  u^t(x) -u^t(y) ]   \partial_t u^t(x)  K(x-y) dy dx    .
\end{equation}
Note that  $ \mathcal E^\Phi_K(u,B_R)= \mathcal E^\Phi_K(1,B_R)- \int_0^\infty \partial_t \mathcal E^\Phi_K(u^t,B_R) dt$.  From the fact that $ \mathcal E^\Phi_K(1,B_R)=0$, we obtain   
\begin{eqnarray*}\label{EKT}
\mathcal E^\Phi_K(u, B_R) &\le&   \int_{\mathbb{R}^n\setminus B_R} \int_{B_R}  \int_0^\infty | \Phi'[  u^t(x) -u^t(y) ] |  \partial_t u^t(x)  K(x-y) dt dy dx 
\\&\le&  \int_{\mathbb{R}^n\setminus B_R} \int_{B_R}  \int_0^\infty |  u^t(x) -u^t(y)  |^{\beta-1}  \partial_t u^t(x)  K(x-y) dt dy dx  ,
\end{eqnarray*}
where  (\ref{Phidtb}) is used.  Note that $|u^t(x)- u^t(y)| \le C |x-y|$. From the boundedness of $u$ and $|\nabla u|$,   we have  
\begin{equation*}
\mathcal E^\Phi_K(u, B_R) \le  C   \iint_{[(\mathbb{R}^n\setminus B_R )\times B_R ] }  |x-y|^{\beta-1}  K(x-y)  dy dx   . 
\end{equation*}
Therefore,   
\begin{equation}\label{Pik}
\mathcal E^\Phi_K(u, B_R) \le  C \sum_{k=1}^3  \iint_{\Pi^k_R }  |x-y|^{\beta-1} K(x-y)   dy dx
=: C \sum_{i=1}^3 I_i(R),  
\end{equation}
when 
\begin{equation}\label{Pi123}
\Pi^1_R:=B_{R-R_*}\times (\mathbb{R}^n\setminus B_R), \ \ \Pi^2_R:=B_{R}\times (\mathbb{R}^n\setminus B_{R+R_*}), \ \ \Pi^3_R:=(B_{R}\setminus B_{R-R_*})\times(B_{R+R_*}\setminus B_{R})  .  
\end{equation}
We first assume that (\ref{Jumpki}) holds.  Note that the above integrals $I_1(R)$ and $I_2(R)$,  on domains  $\Pi^1_R$ and $\Pi^2_R$,  vanish.  Hence, 
\begin{equation}\label{EphiBR}
\mathcal E^\Phi_K(u, B_R) \le  C  I_3(R)   \le   C  \int_{ B_{R}\setminus B_{R-R_*}   } \int_{  B_{R+R_*}\setminus B_{R}  }   |x-y|^{\beta-1-n-\alpha}   dy dx   . 
\end{equation}
On the other hand,  straightforward computations show that 
\begin{equation}\label{IntIER}
  \int_{ B_{R}\setminus B_{R-R_*}   } \int_{  B_{R+R_*}\setminus B_{R}  }   |x-y|^{\beta-1-n-\alpha}   dy dx 
\le  C \left\{ \begin{array}{lcl}
\hfill R_* R^{n-1}    \ \ \text{for}\ \ \ \beta- \alpha=1,\\   
\hfill \frac{(2R_*)^{-\alpha+\beta}}{(-1-\alpha+\beta)(-\alpha+\beta)} R^{n-1}    \ \ \text{for}\ \ \beta- \alpha \neq 1,
\end{array}\right.
\end{equation}
when $C$ is a positive constant it does not depend on $R,\alpha,\beta, R_*$.  Combining (\ref{IntIER}) and (\ref{EphiBR}) finishes the proof of (\ref{EKRnminus1}) for the truncated kernels satisfying (\ref{Jumpki}). We now assume that  (\ref{Jumpkei})-(\ref{Jumpkeri}) hold for $D(r)< C r^{-\theta}$  when $\theta>\beta$. 
\begin{equation*}
I_{1}(R) \le  C \left[ \int_{B_{R+R_*}\setminus B_R} dx \right]\left[ \sum_{k=1}^\infty \int_{k R_*<|z|<2k R_*} |z|^{\beta-1} K(z) dz\right]
\le C\left[ \sum_{k=1}^\infty k^{\beta-1-\theta} \right]  R^{n-1}   \le   C R^{n-1} , 
\end{equation*} 
where we have used  $D(r)< C r^{-\theta}$ for  $\theta>\beta$.    For  $I_{2}(R)$, we have 
\begin{eqnarray*}
I_{2}(R) &\le&   \int_{B_{R}}   \int_{|x-y|>R+\kappa_i- |x|}   |y-x|^{\beta-1} K(y-x)  dy dx 
\\&=& \int_{B_{R}}  \sum_{k=1}^\infty \int_{k(R+R_*- |x|)<|x-y|<2k(R+R_*- |x|)}   |y-x|^{\beta-1} K(y-x)  dy dx 
\\&\le&   \int_{B_{R}}  (R+R_* - |x|)^{\beta-1-\theta} dx  \left[ \sum_{k=1}^\infty k^{\beta-1-\theta} \right]\le C R^{n-1} \int_0^R  (R+R_* - r)^{\beta-1-\theta} dr
\\&=&C\left[ \frac{R_*^{\beta-\theta}}{\theta-\beta}  - \frac{(R+R_*)^{\beta-\theta} }{\theta - \beta} \right] R^{n-1} \le 
C\left[ \frac{R_*^{\beta-\theta}}{\theta-\beta} \right] R^{n-1}    ,
\end{eqnarray*} 
when $C$ is a positive constant that is independent from $R$.  Note that due to the structure of the domain $\Pi^3_R$, a similar estimate as  (\ref{IntIER}) holds for $I_3(R)$. This completes the proof.

\end{proof}

We end this section with an energy estimate for  layer solutions of (\ref{main}) when the kernel $K$ satisfies (\ref{Jumpi}) that is a generalization of the fractional Laplacian kernel. Note that in this case, unlike the previous theorem, the energy estimate depends on the exponent $\alpha$. For similar results in the case of fractional Laplacian operator where the Caffarelli-Silvestre extension problem is used we refer interested readers to \cite{cc,cc2}. Note that  our proofs do not rely on the local extension problem and we apply integral estimates directly,  as this is the case in \cite{cs,cp}. 

\begin{thm}\label{thmlayerK2}
Suppose that $ u$ is a bounded monotone layer solution of (\ref{main}) with  $ F(1)=0$ and (\ref{Phidtb}) holds.  Assume also that the kernel $K$ satisfies  (\ref{Jumpi}).   Then,  the following energy estimates hold for $R>\max\{R_*,1\}$. 
\begin{enumerate}
\item[(i)] If $0<\alpha<1$, then $\mathcal E_K(u,B_R)  \le  C R^{n-\alpha}$,
\item[(ii)] If  $\alpha=1$, then $\mathcal E_K(u,B_R) \le  C R^{n-1}\log R$,
\item[(iii)] If $\alpha>1$, then $\mathcal E_K(u,B_R)  \le  C R^{n-1}$,
\end{enumerate} 
where the positive constant $C$ is independent from $R$ but may depend on $R_*,\alpha,\beta$. 
\end{thm}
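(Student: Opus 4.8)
The plan is to run the sliding argument used in the proof of Theorem~\ref{thmlayerK1}, but now retaining the long-range part of the kernel, which is no longer truncated and is precisely what produces the $\alpha$-dependence. I would first introduce the shift $u^t(x):=u(x',x_n+t)$, note that each $u^t$ solves \eqref{main}, and reproduce the computation leading to
\[
\partial_t\mathcal E^\Phi_K(u^t,B_R)=\int_{\mathbb R^n\setminus B_R}\int_{B_R}\Phi'[u^t(x)-u^t(y)]\,\partial_t u^t(x)\,K(x-y)\,dy\,dx ,
\]
together with $\mathcal E^\Phi_K(1,B_R)=0$ (where $\Phi(0)=0$ and $F(1)=0$ are used) and the identity $\mathcal E^\Phi_K(u,B_R)=-\int_0^\infty\partial_t\mathcal E^\Phi_K(u^t,B_R)\,dt$, which is legitimate because monotonicity and the layer asymptotics \eqref{asympsys} force $u^t\to1$ locally uniformly as $t\to+\infty$. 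Taking absolute values and applying Fubini, the crucial estimate is the $t$-uniform bound $|\Phi'[u^t(x)-u^t(y)]|\le C\min\{|x-y|^{\beta-1},1\}$: the factor $|x-y|^{\beta-1}$ comes from \eqref{Phidtb} and $|u^t(x)-u^t(y)|\le C|x-y|$ (boundedness of $\nabla u$), and the constant bound from continuity of $\Phi'$ and boundedness of $u$. Since $\partial_t u^t>0$ with $\int_0^\infty\partial_t u^t(x)\,dt=1-u(x)$ bounded, this yields
\[
\mathcal E^\Phi_K(u,B_R)\le C\iint_{(\mathbb R^n\setminus B_R)\times B_R}\min\{|x-y|^{\beta-1},1\}\,K(x-y)\,dy\,dx .
\]

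Next I would split this double integral according to $|x-y|\le1$ and $|x-y|>1$ and use $K(x-y)\le\Lambda|x-y|^{-n-\alpha}$ from \eqref{Jumpi}. On $\{|x-y|\le1\}$ only points $x$ within distance $1$ of $\partial B_R$ contribute; performing the $y$-integral first produces $\int_{R-|x|}^{1}r^{\beta-2-\alpha}\,dr$, which is either bounded or of order $(R-|x|)^{\beta-1-\alpha}$, and the subsequent integration over the unit shell $B_R\setminus B_{R-1}$ converges because $\beta>\alpha$ — exactly as in \eqref{IntIER} — giving a contribution $\lesssim R^{n-1}$. On $\{|x-y|>1\}$ I would estimate, for $x\in B_R$ with $d:=R-|x|$,
\[
\int_{\{y\notin B_R,\ |x-y|>1\}}|x-y|^{-n-\alpha}\,dy\le\int_{\{|x-y|>\max(d,1)\}}|x-y|^{-n-\alpha}\,dy\le C\max(d,1)^{-\alpha},
\]
and then integrate in $x$: the shell $B_R\setminus B_{R-1}$ contributes $\lesssim R^{n-1}$, while on $B_{R-1}$ one gets $\int_0^{R-1}(R-r)^{-\alpha}r^{n-1}\,dr\le R^{n-1}\int_1^{R}s^{-\alpha}\,ds$, which is $\lesssim R^{1-\alpha}$, $\log R$, or $1$ according as $\alpha<1$, $\alpha=1$, or $\alpha>1$. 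Collecting the two pieces and observing that $R^{n-1}$ is absorbed into $R^{n-\alpha}$ (for $\alpha<1$), $R^{n-1}\log R$, or $R^{n-1}$ for $R$ large gives the three claimed estimates.

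The argument is essentially routine given Theorem~\ref{thmlayerK1}; the genuinely new input is the far-field computation $\int_1^R s^{-\alpha}\,ds$, so the main obstacle is the careful bookkeeping that isolates the exact power of $R$ in each regime of $\alpha$ — in particular checking that the truncation at $\max(d,1)$ and the implicit Beta-function constant $\int_0^1 t^{-\alpha}(1-t)^{n-1}\,dt$ (finite for $\alpha<1$) do not conceal an extra power of $R$. One should also verify explicitly the two analytic points underlying the sliding step: that $t\mapsto\mathcal E^\Phi_K(u^t,B_R)$ is absolutely continuous with $\lim_{t\to+\infty}\mathcal E^\Phi_K(u^t,B_R)=0$, both of which follow by dominated convergence from the integrable majorant $\min\{|x-y|^{\beta-1},1\}K(x-y)$ (together with the near-diagonal bound $\Phi[u(x)-u(y)]\le C|x-y|^\beta$ and $\beta>\alpha$ for the interior term).
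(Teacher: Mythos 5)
Your proposal is correct and takes essentially the same route as the paper: both reduce the energy estimate to bounding $\iint_{(\mathbb R^n\setminus B_R)\times B_R}\min\{|x-y|,R_*\}^{\beta-1}K(x-y)\,dy\,dx$, and the $\alpha$-dependent power of $R$ comes in both cases from the same far-field computation $\int_{B_R}(R-|x|)^{-\alpha}\,dx$. The only cosmetic difference is the decomposition of the domain: the paper reuses the shells $\Pi^1_R,\Pi^2_R,\Pi^3_R$ of (\ref{Pi123}), while you split according to $|x-y|\le 1$ versus $|x-y|>1$.
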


\begin{proof}
The proof is similar to the one of Theorem \ref{thmlayerK1}.   We only need to provide an upper bound for the right-hand side of (\ref{EKT}). From $|u^t(x)- u^t(y)| \le C \min\{R_*, |x-y|\}$ and the  boundedness of $u$,  we have  
\begin{eqnarray}\nonumber
\mathcal E^\Phi_K(u, B_R) &\le&   C    \iint_{(\mathbb{R}^n\setminus B_R )\times B_R  } \left[\min\{R_*, |x-y|\}\right]^{\beta-1} K (x-y)  dy dx   
\\&\le & 
\label{EPhiuBR}C  \iint_{\Pi_R}  \left[\min\{R_*, |x-y|\}\right]^{\beta-1} K (x-y)  dy dx  , 
\end{eqnarray}
where $\Pi_R$ is given by (\ref{Pi123}).  Note that an upper bound for the integral on $\Pi_R^3$ is given by (\ref{IntIER}).  Due to the symmetry in $\Pi^1_R$ and $\Pi^2_R$, we only compute an upper bound for the integral on $\Pi^1_R$ that is 
\begin{eqnarray*}
     R_*^{\beta-1}  \iint_{\Pi^1_R }|x-y|^{-n-\alpha}   dy dx
&=&   R_*^{\beta-1} \int_{B_{R-R_*}   } \int_{ \mathbb{R}^n\setminus B_R(x) }   |z|^{-n-\alpha}   dz dx
\\&\le&R_*^{\beta-1}  \int_{B_{R-R_*}   } \int_{R-|x|}^{\infty}   r^{-1-\alpha}   dr dx
\\&\le&  \frac{R_*^{\beta-1}}{\alpha} \int_{B_{R-R_*}   }  (R - |x|)^{-\alpha} dx   
\\&\le&   \frac{R_*^{\beta-1} }{\alpha} R^{n-1} \int_{0}^{R-R_*}     (R - r)^{-\alpha} dr   . 
\end{eqnarray*}
Straightforward computations show that the latter integral is bounded by the following term, 
\begin{equation}\label{IntJ2R11}
R_*^{\beta-1}\int_{B_{R-R_*}   } \int_{ \mathbb{R}^n\setminus B_R }   |x-y|^{-n-\alpha}   dy dx
\le  C   \left\{ \begin{array}{lcl}
\hfill  \frac{R_*^{\beta-1} }{\alpha} \log \left(\frac{R}{R_*} \right)  R^{n-1}  \ \ &\text{for}& \ \ \ \alpha=1,\\   
\hfill \frac{R^{\beta-1}_*}{\alpha(1-\alpha)} [R^{1-\alpha}- R^{1-\alpha}_*] R^{n-1}  \ \ &\text{for}&\ \ \alpha\neq 1 . 
\end{array}\right.
\end{equation}
Now combining (\ref{IntJ2R11}) and (\ref{EPhiuBR}) completes the proof. 
\end{proof}

\section{Sum of nonlocal operators}\label{SecSum}

This section is devoted to the sum of nonlocal and nonlinear operators as it is stated in \eqref{mainS}. The proofs are similar to the ones given in previous sections. Therefore, we omit the proofs.  The sum of fractional powers of Laplacian operators have been studied in the literature. We refer interested readers to \cite{cs} by Cabr\'{e} and Serra where symmetry results, among other interesting results, are provided via proving and applying the extension problem. In addition, Silvestre in \cite{si} studied  H\"{o}lder estimates and regularity properties for the sum operators. The following theorem states a Poincar\'{e} type inequality for the sum operators. 

\begin{thm}\label{thmpoinSum}
 Assume that  $n,m\ge 1$ and $ u$ is a stable solution of (\ref{mainS}).  Then,  
\begin{eqnarray*}
&& \sum_{i=1}^m \iint_{  \mathbb R^{2n}\cap  \{|\nabla_x u|\neq 0\}} \Phi_i''[u(x) - u(x+y)]     \mathcal A_y(\nabla_x u)  [\eta^2(x)+\eta^2(x+y)] K^i_{\Phi_i}(y) dx dy 
\\&\le&  \sum_{i=1}^m
\iint_{  \mathbb R^{2n}} \Phi_i''[u(x) - u(x+y)]  \mathcal B_y(\nabla_x u) [ \eta(x) - \eta(x+y) ] ^2 K^i_{\Phi_i}(y) d x dy  , 
  \end{eqnarray*} 
for any $\eta \in C_c^1(\mathbb R^{n})$ where 
\begin{eqnarray*}\label{mathcalA}
\mathcal A_y(\nabla_x u) &:= & |\nabla_x u(x)|  |\nabla_x u(x+y)| -\nabla_x u(x)  \cdot \nabla_x u(x+y) , 
\\ \label{mathcalB}
 \mathcal B_y(\nabla_x u)  & :=& |\nabla_x u(x)| | \nabla_x u(x+y)| .
 \end{eqnarray*}
 \end{thm}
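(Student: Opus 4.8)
The plan is to reproduce, term by term in the summation index $i$, the chain of identities that establishes Theorem \ref{thmpoin}, exploiting the fact that every operation involved—differentiation of the equation, application of Lemma \ref{lemtech}, and the algebraic manipulations with test functions—is linear in $i$. The starting point is the analogue of Proposition \ref{propstab} for the sum operator: by definition, a stable solution $u$ of \eqref{mainS} admits $\phi>0$ with $\sum_{i=1}^m L^i_{\Phi_i}[\phi]=f'(u)\phi$, where $L^i_{\Phi_i}$ is the linearization of $T^i_{\Phi_i}$. Multiplying by $\zeta^2/\phi$ for $\zeta\in C_c^1(\RR^n)$, integrating, applying Lemma \ref{lemtech} to each $L^i_{\Phi_i}$ (legitimate since every $K_i$ is even and measurable), and using inside each summand the elementary bound $(a+b)[c^2/a+d^2/b]\le(c-d)^2$ when $ab<0$ with $a=\phi(x)$, $b=-\phi(y)$, $c=\zeta(x)$, $d=\zeta(y)$ together with $\Phi_i''>0$ on $\RR^+$, one obtains the stability inequality
\begin{equation*}
\int_{\RR^n}f'(u)\zeta^2\,dx\le\frac12\sum_{i=1}^m\iint_{\RR^{2n}}\Phi_i''[u(x)-u(y)]\,[\zeta(x)-\zeta(y)]^2\,K_i(x-y)\,dy\,dx .
\end{equation*}

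Next I would test this inequality on $\zeta(x)=|\nabla_x u(x)|\eta(x)$ with $\eta\in C_c^1(\RR^n)$ and expand the square, producing, for each $i$, the three familiar types of terms. In parallel, differentiating \eqref{mainS} in the direction $x_k$ gives $\sum_{i=1}^m L^i_{\Phi_i}[\partial_{x_k}u]=f'(u)\partial_{x_k}u$; multiplying by $\partial_{x_k}u\,\eta^2$, integrating, applying Lemma \ref{lemtech} to each summand, and summing over $k=1,\dots,n$ yields an exact expression for $\int f'(u)|\nabla_x u|^2\eta^2\,dx$ as a sum over $i$ of four terms involving $\nabla_x u(x)\cdot\nabla_x u(x+y)$. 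Combining the two relations eliminates the $\int f'(u)|\nabla_x u|^2\eta^2$ integral together with the diagonal $|\nabla_x u|^2$ terms; substituting $\eta(x)\eta(x+y)=\tfrac12[\eta^2(x)+\eta^2(x+y)]-\tfrac12[\eta(x)-\eta(x+y)]^2$ inside every summand and regrouping then gives exactly the asserted inequality, with $\mathcal A_y$ and $\mathcal B_y$ as defined.

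I do not anticipate a genuine obstacle here, since the linearized operator of \eqref{mainS} is simply $\sum_i L^i_{\Phi_i}$ and Lemma \ref{lemtech} applies to any even measurable kernel, so each step is a faithful copy of the $m=1$ argument carried out inside a finite sum. The only points deserving mild care are the justification of differentiation under the integral sign and of interchanging $\sum_i$ with the principal-value limits for each $i$ under the kernel hypotheses \eqref{Jumpkis}--\eqref{Jumpkeris}, and the observation that the sign $\mathcal A_y(\nabla_x u)\ge0$---which is what makes the left-hand side of the stated inequality meaningful---should be invoked only after all $m$ summands have been assembled. Both are routine and proceed exactly as in Section \ref{SecPoi}.
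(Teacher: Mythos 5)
Your proof is correct and follows exactly the strategy the paper intends: the authors explicitly omit the proof of Theorem \ref{thmpoinSum}, stating it is similar to the single-operator case, and your argument is precisely that adaptation — establish the summed stability inequality via Lemma \ref{lemtech} applied to each $L^i_{\Phi_i}$, test on $\zeta=|\nabla_x u|\eta$, differentiate \eqref{mainS}, and regroup using $\eta(x)\eta(x+y)=\tfrac12[\eta^2(x)+\eta^2(x+y)]-\tfrac12[\eta(x)-\eta(x+y)]^2$ within each summand. No discrepancies to report.
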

 
 Applying the above Poincar\'{e}  inequality as well as other mathematical techniques we provide a one-dimensional symmetry result and a Liouville theorem as what follows. 
 
  \begin{thm}\label{thmoneDi}
Let $m\ge 1$ and $u$ be a bounded stable solution of  (\ref{mainS}) in two dimensions and  (\ref{Phiddtb}) for  each $\Phi_i$ and $\beta_i$.  Assume also that  the kernel $K$ satisfies  either (\ref{Jumpkis}) or (\ref{Jumpkeis}) and (\ref{Jumpkeris}) with $D_i(r)<C r^{-\theta_i}$ for $\theta_i>\beta_i+1$ for all $1\le i\le m$.    Then,  $u$ must be a one-dimensional function.   
  \end{thm}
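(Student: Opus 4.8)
The plan is to mirror the argument of Theorem \ref{Thmain}, treating the sum operator $S_\Phi$ exactly as in the single-operator case but carrying the finite sum through every estimate. First I would observe that, since $u$ is a stable solution of (\ref{mainS}), Theorem \ref{thmpoinSum} gives the Poincar\'e inequality with a finite sum on both sides: writing $\mathcal A_y(\nabla_x u)\ge 0$, the left-hand side dominates, for each $i$,
\begin{equation*}
\iint_{\mathbb R^{2n}\cap\{|\nabla_x u|\neq 0\}} \Phi_i''[u(x)-u(x+y)]\,\mathcal A_y(\nabla_x u)\,[\eta^2(x)+\eta^2(x+y)]\,K_i(y)\,dx\,dy,
\end{equation*}
while the right-hand side is controlled by $\sum_{i=1}^m$ of the mixed-$\eta$ terms, each multiplied by $\|\nabla_x u\|_\infty^2$.

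Next I would bound each summand on the right as in the proof of Theorem \ref{Thmain}: using (\ref{Phiddtb}) for the particular $\Phi_i$ we get $\Phi_i''[u(x)-u(y)]\le C|x-y|^{\beta_i-2}$ from $|u(x)-u(y)|\le C|x-y|$, so the $i$-th term is controlled by $C\iint [\eta(x)-\eta(y)]^2 |x-y|^{\beta_i-2} K_i(x-y)\,dx\,dy$. Testing with the same logarithmic cutoff $\eta$ (equal to $\tfrac12$ on $B_{\sqrt R}$, interpolating logarithmically on $B_R\setminus B_{\sqrt R}$, zero outside $B_R$) and decomposing into the six regions $\Omega^j_R$, the estimates in Cases 1--4 apply verbatim with $\beta$ replaced by $\beta_i$ and $r_*,R_*$ by $r_i,R_i$; the hypothesis $\theta_i>\beta_i+1$ and $\beta_i>\alpha_i$ is exactly what is needed to make the $k$-sums $\sum k^{\beta_i-\theta_i}$ converge and to make each $I_j(R),J_j(R)$ decay like $O(1/\log R)$ as $R\to\infty$. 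Summing the finitely many $i$, the total right-hand side is still $O(1/\log R)$, so letting $R\to\infty$ forces, for each $i$,
\begin{equation*}
\Phi_i''[u(x)-u(x+y)]\,\mathcal A_y(\nabla_x u)\,K_i(y)=0\quad\text{a.e. }x,y\in\mathbb R^2.
\end{equation*}

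Finally, since $u$ is non-constant and each $\Phi_i''$ is even and positive on $\mathbb R^+$, pick any one index $i$ (say $i=1$): on the set where $|\nabla_x u|\neq 0$ and for $y\in B_{r_1}$ where $K_1>0$, we conclude $\mathcal A_y(\nabla_x u)=0$, i.e. $|\nabla_x u(x)||\nabla_x u(x+y)| = \nabla_x u(x)\cdot\nabla_x u(x+y)$, equivalently $\nabla_x u(x)\cdot\nabla_x^\perp u(x+y)=0$. As in the proof of Theorem \ref{Thmain} (and the classical Ghoussoub--Gui argument in $n=2$), this proportionality of the gradient at nearby points, propagated by connectedness, yields that the level sets of $u$ are parallel lines, hence $u$ is one-dimensional. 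The only genuinely new bookkeeping relative to Theorem \ref{Thmain} is verifying that a \emph{single} kernel $K_i$ with finite positive range suffices to run the last step --- it does, since each $K_i$ is bounded below on a ball $B_{r_i}$ --- and making sure the decay exponents are taken per-$i$; I expect the main (mild) obstacle is just organizing the six-region estimate uniformly across the $m$ operators, which is routine given the single-operator proof.
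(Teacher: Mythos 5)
Your proposal is correct and follows exactly the approach the paper intends: the paper states in Section~\ref{SecSum} that the proofs for the sum operator are similar to those in the preceding sections and omits them, and your argument is precisely the transposition of the proof of Theorem~\ref{Thmain} — apply Theorem~\ref{thmpoinSum}, bound each of the $m$ right-hand summands with (\ref{Phiddtb}) for $\Phi_i$, run the six-region logarithmic-cutoff estimate per $i$ with $(\alpha_i,\beta_i,\theta_i,r_i,R_i)$, sum the finitely many $O(1/\log R)$ bounds, and conclude $\mathcal A_y(\nabla_x u)=0$ on a ball via the positivity of any single $K_i$. The only mild imprecision is the phrase that ``the left-hand side dominates, for each $i$'' — what is used is that each nonnegative summand is individually bounded by the full right-hand side, which is what you in fact exploit.
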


 \begin{thm}\label{thmliouville}
Let $m\ge 1$ and  $ u$ be a bounded solution of (\ref{mainS}) when the kernel $K_i$ satisfies  either (\ref{Jumpkis}) or (\ref{Jumpkeis}) and (\ref{Jumpkeris}) with $D_i(r)<C r^{-\theta_i}$ for $\theta_i>\beta_i+1$ for all $1\le i\le m$.  If $f(u) \ge0$ or $uf(u)\le 0$, then $u$ must be constant provided   $n\le \min\{\beta_i, 1\le i\le m\}$.    
\end{thm}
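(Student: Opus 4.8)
The plan is to adapt the proof of the single–operator Liouville theorem of Section~\ref{SecOne} to the sum operator, the only genuinely new points being the simultaneous presence of several exponents $\beta_i$ and several kernels $K_i$; throughout I use (\ref{Phidtb}) for each $\Phi_i$. I treat the case $f(u)\ge 0$, the case $uf(u)\le 0$ being identical after replacing the multiplier $(u-\|u\|_\infty)\eta^{2M}$ below by $u\,\eta^{2M}$. Fix $M\ge 1$ with $2M\ge\max_{1\le i\le m}\beta_i$ and let $\eta\in C_c^1(\RR^n)$ be the standard cut–off: $\eta\equiv 1$ on $\overline{B_R}$, $\eta\equiv 0$ outside $B_{2R}$, $\|\nabla\eta\|_\infty\le CR^{-1}$, with $R$ large. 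Multiplying (\ref{mainS}) by $(u(x)-\|u\|_\infty)\eta^{2M}(x)$, integrating, and using $f(u)\ge 0$ together with $u-\|u\|_\infty\le 0$, we get $\int_{\RR^n}(u(x)-\|u\|_\infty)\eta^{2M}(x)\,S_\Phi[u(x)]\,dx\le 0$; applying Lemma~\ref{lemtech} to each $T^i_{\Phi_i}$ and summing yields
\begin{equation*}
\sum_{i=1}^m\iint_{\RR^{2n}}\Phi_i'[u(x)-u(y)]\,[(u(x)-\|u\|_\infty)\eta^{2M}(x)-(u(y)-\|u\|_\infty)\eta^{2M}(y)]\,K_i(x-y)\,dx\,dy\le 0 .
\end{equation*}

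Write $A_i(R):=\iint_{\RR^{2n}}\Phi_i'[u(x)-u(y)][u(x)-u(y)][\eta^{2M}(x)+\eta^{2M}(y)]K_i(x-y)\,dx\,dy\ge 0$. Adding and subtracting $u(y)\eta^{2M}(x)$ and $u(x)\eta^{2M}(y)$, using the oddness of each $\Phi_i'$, and $|\eta^{2M}(x)-\eta^{2M}(y)|\le C|\eta(x)-\eta(y)|[\eta^{2M-1}(x)+\eta^{2M-1}(y)]$, exactly as in Section~\ref{SecOne}, one arrives at
\begin{equation*}
\sum_{i=1}^m A_i(R)\le C\sum_{i=1}^m\iint_{\RR^{2n}}|\Phi_i'[u(x)-u(y)]|\,|\eta(x)-\eta(y)|\,[\eta^{2M-1}(x)+\eta^{2M-1}(y)]\,K_i(x-y)\,dx\,dy .
\end{equation*}
For each $i$ I apply H\"older's inequality with respect to $K_i(x-y)\,dx\,dy$ with conjugate exponents $(\tfrac{\beta_i}{\beta_i-1},\beta_i)$, putting $|\eta(x)-\eta(y)|$ in the $\beta_i$–factor. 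Two pointwise facts make this close: because $2M\ge\beta_i$ and $0\le\eta\le 1$ we have $[\eta^{2M-1}(x)+\eta^{2M-1}(y)]^{\beta_i/(\beta_i-1)}\le C[\eta^{2M}(x)+\eta^{2M}(y)]$; and from (\ref{Phidtb}), $|\Phi_i'[u(x)-u(y)]|\le C|u(x)-u(y)|^{\beta_i-1}$, together with $t\Phi_i'(t)\ge 0$, we get $|\Phi_i'[u(x)-u(y)]|^{\beta_i/(\beta_i-1)}\le C\,(u(x)-u(y))\Phi_i'[u(x)-u(y)]$. Since $\eta(x)-\eta(y)$ is supported in $\cup_{k=1}^4\Gamma^k_R$ — the sets of (\ref{gamma2}), which satisfy $\cup_{k=1}^4\Gamma^k_R\subset\{|x|\ge R\}\cup\{|y|\ge R\}$ — this leads to
\begin{equation*}
\sum_{i=1}^m A_i(R)\le C\sum_{i=1}^m\widetilde A_i(R)^{\frac{\beta_i-1}{\beta_i}}\Big(\iint_{\Gamma_R}|\eta(x)-\eta(y)|^{\beta_i}\,K_i(x-y)\,dx\,dy\Big)^{\frac1{\beta_i}},
\end{equation*}
where $\widetilde A_i(R)\le A_i(R)$ is the contribution to $A_i(R)$ of $\cup_{k=1}^4\Gamma^k_R$.

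It remains to bound the cut–off integrals $\iint_{\Gamma_R}|\eta(x)-\eta(y)|^{\beta_i}K_i$, which is precisely the computation of Section~\ref{SecOne} carried out for each $K_i$ with the exponent $\beta_i$: decomposing $\Gamma_R=\cup_{k=1}^6\Gamma^k_R$, using $|\eta(x)-\eta(y)|^{\beta_i}\le CR^{-\beta_i}|x-y|^{\beta_i}$ on $\Gamma^1_R,\Gamma^2_R,\Gamma^3_R$ and $|\eta(x)-\eta(y)|=1$ on $\Gamma^4_R$, splitting further into $|x-y|\le R_i$ and $|x-y|>R_i$, and invoking (\ref{Jumpkis}) (respectively (\ref{Jumpkeis})--(\ref{Jumpkeris}) with $D_i(r)<Cr^{-\theta_i}$, $\theta_i>\beta_i+1$), one gets bounds of the form $CR^{n-1-\beta_i}$, $CR^{n-\beta_i}$ and $CR^{n-\theta_i}$, all bounded uniformly in $R$ because $n\le\beta_i$ and $\theta_i>\beta_i+1>n$. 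Inserting this into the last display, a standard absorption (Young's inequality, since $\tfrac{\beta_i-1}{\beta_i}<1$) first gives $\sum_i A_i(R)\le C$ uniformly in $R$, whence by monotone convergence $\sum_i\iint_{\RR^{2n}}\Phi_i'[u(x)-u(y)][u(x)-u(y)]K_i<\infty$; then each tail $\widetilde A_i(R)\to 0$ as $R\to\infty$, so the last display forces $\sum_i A_i(R)\to 0$, and therefore
\begin{equation*}
\sum_{i=1}^m\iint_{\RR^{2n}}\Phi_i'[u(x)-u(y)][u(x)-u(y)]\,K_i(x-y)\,dx\,dy=0 .
\end{equation*}
Since each summand is nonnegative, each integrand vanishes a.e.; as $\Phi_i'$ is odd and positive on $\RR^+$, the vanishing of $(u(x)-u(y))\Phi_i'[u(x)-u(y)]$ is equivalent to $u(x)=u(y)$, and $K_i>0$ on $\{|x-y|\le r_i\}$ by (\ref{Jumpkis}); hence $u(x)=u(y)$ for a.e.\ $x$ and every $y\in B_{r_i}(x)$, so $u$ is locally constant, thus constant.

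The main obstacle I anticipate is organizational rather than conceptual: reconciling the single cut–off power $2M$ in the test function with the term–by–term H\"older exponents $\beta_i/(\beta_i-1)$ (resolved by taking $2M\ge\max_i\beta_i$), and, more delicately, the borderline case $n=\min_i\beta_i$, in which the cut–off integrals only remain bounded rather than tending to zero — so that one must run the two–step bootstrap/tail argument above instead of a direct passage $R\to\infty$. Apart from that, the argument is a transcription of Sections~\ref{SecPoi}--\ref{SecOne}.
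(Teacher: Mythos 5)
Your proposal is correct and is exactly what the paper intends: Section~\ref{SecSum} omits the proof, stating only that it is ``similar to the ones given in previous sections,'' i.e.\ the single-operator Liouville proof of Section~\ref{SecOne}, and your argument is a faithful transcription of that proof to the sum. The one genuinely new point --- decoupling the single cut-off power $2M\ge\max_i\beta_i$ from the per-term H\"older exponents $\beta_i/(\beta_i-1)$ via $[\eta^{2M-1}(x)+\eta^{2M-1}(y)]^{\beta_i/(\beta_i-1)}\le C[\eta^{2M}(x)+\eta^{2M}(y)]$ (valid because $0\le\eta\le1$ and $(2M-1)\tfrac{\beta_i}{\beta_i-1}\ge 2M$) --- is handled correctly, and your two-step argument (uniform boundedness, then letting the tail $\widetilde A_i(R)\to 0$) is precisely the mechanism the paper's terse conclusion in the Section~\ref{SecOne} proof implicitly relies on to cover the borderline case $n=\min_i\beta_i$.
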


Consider the following energy functional corresponding to (\ref{mainS}) 
\begin{equation*}
\mathcal E^\Phi_K(u,\Omega):=\sum_{i=1}^m \mathcal K^{\Phi_i}_{K_i}{(u,\Omega)} - \int_{\Omega}  F(u) dx, 
\end{equation*}
where each $\mathcal K^{\Phi_i}_{K_i}{(u,\Omega)} $ satisfies (\ref{kphisymm}) for even $\Phi_i$ and $K_i$.  Then, the following energy estimate holds for the sum operator when the kernel $K$ is of finite range or with decay at infinity.

\begin{thm}\label{thmlayerK1i}
Suppose that $ u$ is a bounded monotone layer solution of (\ref{main}) with  $ F(1)=0$ and (\ref{Phidtb}) hold.  Assume also that the kernel $K_i$ satisfies  either (\ref{Jumpkis}) or (\ref{Jumpkeis}) and (\ref{Jumpkeris}) with $D_i(r)<C r^{-\theta_i}$ for $\theta_i>\beta_i$ for all $1\le i\le m$.   Then,     
\begin{equation}\label{sumEKRn}
\mathcal E^\Phi_K(u,B_R) \le C  R^{n-1} \ \ \text{for} \ \  R>R_*:=\min\{R_i,1\le i\le m\},  
\end{equation}
where the positive constant $C$ is independent from $R$ but may depend on $R_i,\alpha_i,\beta_i$. 
\end{thm}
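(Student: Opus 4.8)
The plan is to mimic the proof of Theorem \ref{thmlayerK1}, adapting it additively over the $m$ operators. First I would set the shift function $u^t(x):=u(x',x_n+t)$ as before and compute $\partial_t \mathcal E^\Phi_K(u^t,B_R)$. Since the energy is now a finite sum $\sum_{i=1}^m \mathcal K^{\Phi_i}_{K_i}(u^t,B_R) - \int_{B_R} F(u^t)\,dx$, differentiation commutes with the sum and each kinetic term contributes exactly as in the single-operator case; the crucial cancellation is that $u^t$ solves $S_\Phi[u^t]=f(u^t)$, so the bulk term $\int_{B_R}\partial_t u^t(x)\,S_\Phi[u^t(x)]\,dx - \int_{B_R} f'(u^t)\partial_t u^t\,dx$ vanishes, leaving only
\begin{equation*}
\partial_t \mathcal E^\Phi_K(u^t,B_R) = \sum_{i=1}^m \int_{\mathbb R^n\setminus B_R}\int_{B_R} \Phi_i'[u^t(x)-u^t(y)]\,\partial_t u^t(x)\,K_i(x-y)\,dy\,dx.
\end{equation*}
Here it is essential that the Euler--Lagrange identity of Lemma \ref{lemtech} holds termwise, so no interaction terms between different $K_i$ appear.

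Next, using $\mathcal E^\Phi_K(u,B_R) = \mathcal E^\Phi_K(1,B_R) - \int_0^\infty \partial_t \mathcal E^\Phi_K(u^t,B_R)\,dt$ and $\mathcal E^\Phi_K(1,B_R)=0$ (which uses $F(1)=0$ and that constants cancel in every kinetic term), I would bound
\begin{equation*}
\mathcal E^\Phi_K(u,B_R) \le \sum_{i=1}^m \int_{\mathbb R^n\setminus B_R}\int_{B_R}\int_0^\infty |\Phi_i'[u^t(x)-u^t(y)]|\,\partial_t u^t(x)\,K_i(x-y)\,dt\,dy\,dx.
\end{equation*}
Applying $|\Phi_i'[t]|\le C|t|^{\beta_i-1}$ from \eqref{Phidtb}, then $|u^t(x)-u^t(y)|\le C|x-y|$ from boundedness of $|\nabla u|$, and integrating out the $t$-variable against $\partial_t u^t(x)$ (bounded total variation since $u$ is monotone and bounded), each summand is controlled by $C\iint_{(\mathbb R^n\setminus B_R)\times B_R}|x-y|^{\beta_i-1}K_i(x-y)\,dy\,dx$.

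The final step is to estimate each of these $m$ integrals exactly as in Theorem \ref{thmlayerK1}: split $(\mathbb R^n\setminus B_R)\times B_R$ into the three regions $\Pi^1_{R},\Pi^2_{R},\Pi^3_{R}$ of \eqref{Pi123} (with $R_*$ replaced by $R_i$). For kernels of type \eqref{Jumpkis} the contributions of $\Pi^1_R,\Pi^2_R$ vanish for $R>R_i$ and $\Pi^3_R$ gives $C R^{n-1}$ via the computation \eqref{IntIER}; for kernels of type \eqref{Jumpkeis}--\eqref{Jumpkeris} with $D_i(r)<Cr^{-\theta_i}$, $\theta_i>\beta_i$, the dyadic-annulus sums $\sum_k k^{\beta_i-1-\theta_i}$ converge and again yield $CR^{n-1}$. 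Summing the $m$ bounds and taking $R>R_*=\min\{R_i\}$ gives \eqref{sumEKRn} with $C$ depending on $R_i,\alpha_i,\beta_i$ but not $R$. The only mild subtlety --- and the point I would be most careful about --- is verifying that the kinetic energy of constants vanishes and that the Euler--Lagrange/integration-by-parts identity of Lemma \ref{lemtech} applies to each $T^i_{\Phi_i}$ with its own kernel $K_i$ and nonlinearity $\Phi_i$ independently, so that the single-operator argument genuinely decouples into a sum; everything else is a routine repetition of the estimates already carried out.
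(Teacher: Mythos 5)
Your proof is correct and follows exactly the approach the paper intends: Section \ref{SecSum} explicitly omits the proof of Theorem \ref{thmlayerK1i} on the grounds that it is a termwise repetition of the argument for Theorem \ref{thmlayerK1}, which is precisely what you carry out, including the key observation that the Euler--Lagrange identity of Lemma \ref{lemtech} and the vanishing of the kinetic energy of constants hold for each $(\Phi_i,K_i)$ pair independently so the sum decouples. The only point worth noting is that for the truncated kernels the vanishing of the integrals over $\Pi^1_R,\Pi^2_R$ requires $R>R_i$ for each $i$, so the natural threshold is really $\max\{R_i\}$ rather than the $\min\{R_i\}$ written in the statement; but this is an artifact of the paper's wording, not a gap in your argument.
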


Lastly, the following theorem provides an energy estimate for layer solutions of (\ref{mainS}) where each kernel $K_i$ satisfies (\ref{alKal2is}). Note that, unlike the above, the following energy estimate depends on the minimum of all exponents $\alpha_i$. 

\begin{thm}\label{thmlayerK2i}
Suppose that $ u$ is a bounded monotone layer solution of (\ref{mainS}) with  $ F(1)=0$ and (\ref{Phidtb}) holds.  Assume also that the kernel $K_i$ satisfies  (\ref{alKal2is}).   Then,  the following energy estimates hold for $R>\max\{R_*,1\}$. 
\begin{enumerate}
\item[(i)] If $0<\alpha_*<1$, then $\mathcal E_K(u,B_R)  \le  C R^{n-\alpha_*}$,
\item[(ii)] If  $\alpha_*=1$, then $\mathcal E_K(u,B_R) \le  C R^{n-1}\log R$,
\item[(iii)] If $\alpha_*>1$, then $\mathcal E_K(u,B_R)  \le  C R^{n-1}$,
\end{enumerate} 
where $\alpha_*:=\min\{\alpha_i, 1\le i\le m\}$ and   the positive constant $C$ is independent from $R$ but may depend on $R_i,\alpha_i,\beta_i$. 
\end{thm}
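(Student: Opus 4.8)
The plan is to mimic the proof of Theorem \ref{thmlayerK2} term by term, since the sum operator only adds finitely many contributions. Concretely, I would start from the shift-function argument: set $u^t(x):=u(x',x_n+t)$, differentiate the energy $\mathcal E^\Phi_K(u^t,B_R)=\sum_{i=1}^m\mathcal K^{\Phi_i}_{K_i}(u^t,B_R)-\int_{B_R}F(u^t)\,dx$ in $t$, and use that each $u^t$ solves \eqref{mainS} (equivalently $S_\Phi[u^t]=f(u^t)$) to cancel the interior terms. This yields, exactly as in the single-operator case,
\begin{equation*}
\partial_t\mathcal E^\Phi_K(u^t,B_R)=\sum_{i=1}^m\int_{\mathbb R^n\setminus B_R}\int_{B_R}\Phi_i'[u^t(x)-u^t(y)]\,\partial_t u^t(x)\,K_i(x-y)\,dy\,dx,
\end{equation*}
and then integrating in $t$ from $0$ to $\infty$ and using $F(1)=0$ together with $|u^t(x)-u^t(y)|\le C\min\{R_i,|x-y|\}$ (boundedness of $u$ and $\nabla u$, plus the finite-range-type behavior of $K_i$) gives
\begin{equation*}
\mathcal E^\Phi_K(u,B_R)\le C\sum_{i=1}^m\iint_{(\mathbb R^n\setminus B_R)\times B_R}\big[\min\{R_i,|x-y|\}\big]^{\beta_i-1}K_i(x-y)\,dy\,dx.
\end{equation*}

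Next I would estimate each of the $m$ double integrals separately. Each kernel $K_i$ satisfies \eqref{alKal2is}, i.e.\ $K_i(x-y)\le \Lambda_i|x-y|^{-n-\alpha_i}$, so the $i$-th integral is controlled exactly by the quantity analyzed in the proof of Theorem \ref{thmlayerK2}: split the $(x,y)$-region into the analogues of $\Pi^1_R,\Pi^2_R,\Pi^3_R$ (using $R_i$ in place of $R_*$), bound the near-diagonal piece $\Pi^3_R$ by the $R^{n-1}$-type estimate \eqref{IntIER}, and bound the far pieces $\Pi^1_R,\Pi^2_R$ by \eqref{IntJ2R11}, which contributes $CR^{n-\alpha_i}$ when $\alpha_i<1$, $CR^{n-1}\log R$ when $\alpha_i=1$, and $CR^{n-1}$ when $\alpha_i>1$. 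Adding the $m$ contributions and using that a finite sum of such bounds is dominated by its worst term, which corresponds to $\alpha_*=\min_i\alpha_i$, gives precisely $\mathcal E_K(u,B_R)\le CR^{n-\alpha_*}$, $CR^{n-1}\log R$, or $CR^{n-1}$ in the three cases of the statement, with $C$ depending on the $R_i,\alpha_i,\beta_i$ but not on $R$.

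There is essentially no new obstacle here beyond the single-operator case; the only point requiring a word of care is the summation step, namely that when several $\alpha_i$ fall on different sides of $1$ the overall growth is governed by the smallest $\alpha_i$, so one must check that e.g.\ a term $CR^{n-1}$ coming from an $\alpha_i>1$ is absorbed by a term $CR^{n-\alpha_*}$ with $\alpha_*<1$ (true since $R>\max\{R_*,1\}$ makes $R^{n-\alpha_*}\ge R^{n-1}$), and likewise that a logarithmic term is absorbed by a genuine power when $\alpha_*<1$. Since $F(1)=0$ is assumed and each $\Phi_i$ satisfies \eqref{Phidtb} with its own $\beta_i\ge 2$, the bound $|\Phi_i'[t]|\le C|t|^{\beta_i-1}$ is available for each $i$, so the replacement of $\Phi_i'$ by the polynomial bound goes through uniformly. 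Hence the theorem follows by a routine reduction to Theorem \ref{thmlayerK2} applied summand by summand.
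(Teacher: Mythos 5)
Your proposal is correct and follows precisely the route the paper intends: the paper explicitly omits the proofs in Section~6, stating they are ``similar to the ones given in previous sections,'' and your argument is exactly the summand-by-summand reduction to Theorem~\ref{thmlayerK2}, including the correct observation that the dominant contribution after summing over $i$ comes from $\alpha_*=\min_i\alpha_i$. The only minor imprecision is attributing the bound $|u^t(x)-u^t(y)|\le C\min\{R_i,|x-y|\}$ partly to ``finite-range-type behavior of $K_i$''; in fact that bound comes solely from the global boundedness and Lipschitz regularity of $u$ and is independent of the kernel, but this does not affect the validity of the argument.
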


\noindent {\it Acknowledgment}.   The  authors would like to thank the anonymous referee(s) for comments and for pointing out the reference \cite{bucur}.


\begin{thebibliography}{99}


\bibitem {aac} G. Alberti, L. Ambrosio, and X. Cabr\'{e}, \emph{On a long-standing conjecture of E. De Giorgi: symmetry in 3D for general nonlinearities and a local minimality property}, Acta Appl. Math.,  65 (2001) pp. 9-33. 


\bibitem{ac} L. Ambrosio and X. Cabr\'{e}, \emph{Entire solutions of semilinear elliptic equations in $\mathbb R^3$ and a conjecture of De Giorgi}, J. Amer. Math. Soc., 13 (2000) pp. 725-739. 

\bibitem{bar} M. T.  Barlow, \emph{On the Liouville property for divergence form operators}, Canadian J. Math.,  50 (1998) pp.  487-496.

\bibitem{bbg} M. T. Barlow, R. F. Bass and C. Gui, \emph{The Liouville property and a conjecture of De Giorgi}, Comm. Pure Appl. Math., 53 (2000) pp. 1007-1038. 

\bibitem{bl} R. F. Bass and  D. A.  Levin, \emph{Harnack inequalities for jump processes}, Potential
Anal.,   17 (2002) pp. 375-388. 

\bibitem{bcn} H. Berestycki, L. Caffarelli, and L. Nirenberg, \emph{Further qualitative properties for elliptic equations in unbounded domains}, Ann. Scuola Norm. Sup. Pisa Cl. Sci. 25 (1997) pp. 69-94.

\bibitem{bcf} C. Bjorland, L.  Caffarelli, A.  Figalli, \emph{Non-local gradient dependent operators},   Adv. Math.,  230 (2012) pp. 1859-1894. 

\bibitem{bucur} C. Bucur, \emph{A symmetry result in $\mathbb R^2$ for global minimizers  of a general type of nonlocal energy},  https://arxiv.org/pdf/1708.04924.pdf

 \bibitem{cc} X.  Cabr\'{e}, E. Cinti, \emph{Energy estimates and 1-D symmetry for nonlinear equations involving the half-Laplacian}, Discrete Contin. Dyn. Syst. 28 (2010) pp. 1179-1206. 

\bibitem{cc2}  X.  Cabr\'{e}, E. Cinti, \emph{Sharp energy estimates for nonlinear fractional diffusion equations},  Calc. Var. Partial Differ. Equ.,  49 (2014)  pp. 233-269.

\bibitem{csol}  X.  Cabr\'{e},  Sol\'{a}-Morales, \emph{Layer solutions in a half-space for boundary reactions},  Commun. Pure Appl. Math.,  58 (2005) pp. 1678-1732.

 \bibitem{cs} X.  Cabr\'{e}, J.  Serra, \emph{An extension problem for sums of fractional Laplacians and 1- D symmetry of phase transitions},  Nonlinear Anal., 137 (2016) pp. 246-265. 
 
 \bibitem{cabSire} X. Cabr\'e and Y.  Sire, \emph{ Nonlinear equations for fractional Laplacians II: Existence, uniqueness, and qualitative properties of solutions},
Trans. Amer. Math. Soc.,  367 (2015) pp. 911-941. 

\bibitem{cas} L. Caffarelli and L. Silvestre, \emph{An extension problem related to the fractional Laplacian}, Comm. Partial Differential Equations, 32 (2007) pp. 1245-1260.

\bibitem{ckp1} A. D. Castro, T.  Kuusi, G.  Palatucci, \emph{Local behavior of fractional p-minimizers},  Annales de l'Institut Henri Poincare (C) Non Linear Analysis, 
 33 (2016) pp.  1279-1299. 

\bibitem{ckp2} A. D. Castro, T.  Kuusi, G.  Palatucci, \emph{Nonlocal Harnack inequalities}, Journal of Functional Analysis,  267 (2014) pp. 1807-1836. 

\bibitem{cf}  E. Cinti, F.  Ferrari, \emph{Geometric inequalities for fractional Laplace operators and applications}, NoDEA Nonlinear Differential Equations Appl.,  22 (2015), pp. 1699-1714. 

\bibitem{cp}  M. Cozzi, T.  Passalacqua, \emph{One-dimensional solutions of non-local Allen-Cahn-type equations with rough kernels},  J. Differential Equations,  260 (2016) pp. 6638-6696.

\bibitem{DeGiorgi} E. De Giorgi, \emph{Convergence problems for functional and operators}, Proceedings of the International Meeting on Recent Methods in Nonlinear Analysis (Rome, 1978), pp. 131-188, Pitagora, Bologna, 1979. 

\bibitem{dkw} M. del Pino, M. Kowalczyk, J. Wei,   \emph{On De Giorgi's conjecture in
dimension $N\ge 9$}. Ann. of Math.,  (2) 174 (2011) pp. 1485-1569. 


\bibitem{fsv} A. Farina, A., B. Sciunzi, E.  Valdinoci, \emph{Bernstein and de giorgi type
problems: New results via a geometric approach}. Ann. Sc. Norm. Super. Pisa Cl.
Sci. 7 (2008)  pp. 741-791.

 \bibitem{fsv2} A. Farina, A., B. Sciunzi, E.  Valdinoci, \emph{On a Poincar\'{e} type formula for solutions of singular and degenerate elliptic equations}, Manuscripta Math. 132 (2010) pp.  335-342.
 
 
\bibitem{fg} M. Fazly, C. Gui,   \emph{On nonlocal systems with jump processes of finite range and with decays},   Preprint 2018. 
 


\bibitem {gg1} N. Ghoussoub, C. Gui, \emph{On a conjecture of De Giorgi and some related problems}, Math. Ann. 311 (1998), no. 3, pp. 481-491.  

\bibitem {gg2} N. Ghoussoub, C. Gui, \emph{About De Giorgi's conjecture in dimensions 4 and 5}, Ann. of Math. (2) 157 (2003) pp. 313-334.


\bibitem{hrsv} F.  Hamel, X. Ros-Oton, Y. Sire,  E.  Valdinoci,  \emph{A one-dimensional symmetry result for a class of nonlocal semilinear equations in the plane}, Ann. Inst. H. Poincar\'{e} Anal. Non Lin\'{e}aire 34 (2017), no. 2, pp. 469-482.
 
 \bibitem{kms2} T.  Kuusi, G.  Mingione, Y.  Sire, \emph{Nonlocal equations with measure data},  Comm. Math. Phys. 337 (2015), no. 3, pp. 1317-1368. 


\bibitem{savin}  O, Savin, \emph{Regularity of flat level sets in phase transitions}, Ann. of Math. (2) 169 (2009), no. 1, pp. 41-78.
 
\bibitem{si} L. Silvestre, \emph{H\"{o}lder estimates for solutions of integro-differential equations like the fractional Laplace},  Indiana University Mathematics Journal., 55 (2006) pp. 1155-1174.

\bibitem{sv} Y. Sire and  E. Valdinoci, \emph{Fractional Laplacian phase transitions and boundary reactions: A geometric inequality and a symmetry result}, J. Functional Analysis,   256 (2009) pp. 1842-1864.


\bibitem{sz}  P. Sternberg and K. Zumbrun, \emph{A Poincar\'{e} inequality with applications to
volume-constrained area-minimizing surfaces},  J. Reine Angew. Math.,   503 (1998) pp. 63-85. 

\bibitem{sz1}  P. Sternberg and K. Zumbrun, \emph{Connectivity of phase boundaries in strictly convex domains},  Arch. Ration. Mech. Anal.,  141 (1998) pp. 375-400. 




\end{thebibliography}
\end{document}